\documentclass[11pt]{amsart}
\usepackage{hyperref}
\hypersetup{nesting=true,debug=true,naturalnames=true}
\usepackage{graphicx,amssymb,upref}

\usepackage[usenames,dvipsnames]{pstricks}
\usepackage{epsfig}
\usepackage{color}
\usepackage[latin1]{inputenc}
\usepackage[T1]{fontenc}
\usepackage{amsmath, amsthm}
\usepackage[english]{babel}
\usepackage{amssymb}
\usepackage{latexsym}
\usepackage{graphicx,amssymb,upref}
\usepackage[all]{xy}
\usepackage{hyperref}
\hypersetup{nesting=true,debug=true,naturalnames=true}
\usepackage{tikz}
\usetikzlibrary{matrix}

\setcounter{section}{0} 
\setlength{\oddsidemargin}{0.25cm}
\setlength{\evensidemargin}{0.25cm} \setlength{\textwidth}{16.5cm}
\setlength{\textheight}{22cm} \setlength{\topmargin}{0.1cm}


\newtheorem{theorem}{Theorem}[section]

\newtheorem{corollary}[theorem]{Corollary}
\theoremstyle{definition}
\newtheorem{definition}[theorem]{Definition}
\newtheorem{example}[theorem]{Example}

\newtheorem{remark}[theorem]{Remark}
\newtheorem{notation}[theorem]{Notation}

\newcommand{\ot}{\otimes}
\newcommand{\co}{\circ}

\let\<\langle
\let\>\rangle

\let\uml\"

\title[Categorical equivalences for Hopf trusses and their modules]{Categorical equivalences for Hopf trusses and their modules}

\title{Categorical equivalences for Hopf trusses and their modules} 

\begin{document}

\maketitle

\begin{center}
	{\bf  Ram\'on
		Gonz\'{a}lez Rodr\'{\i}guez$^{1}$ and Ana Bel\'en Rodr\'{\i}guez Raposo$^{2}$}.
\end{center}

\vspace{0.4cm}

\begin{center}
	{\small $^{1}$ [https://orcid.org/0000-0003-3061-6685].}
\end{center}
\begin{center}
	{\small  CITMAga, 15782 Santiago de Compostela, Spain.}
\end{center}
\begin{center}
	{\small  Universidade de Vigo, Departamento de Matem\'{a}tica Aplicada II,  E. E. Telecomunicaci\'on,
		E-36310  Vigo, Spain.
		\\email: rgon@dma.uvigo.es}
\end{center}

\vspace{0.2cm}

\begin{center}
	{\small $^{2}$ [https://orcid.org/0000-0002-8719-5159]}
\end{center}
\begin{center}
	{\small  CITMAga, 15782 Santiago de Compostela, Spain.}
\end{center}
\begin{center}
	{\small Universidade de Santiago de Compostela, Departamento de Did\'acticas Aplicadas, Facultade C. C. Educaci\'on, E-15782 Santiago de Compostela, Spain.
		\\email: anabelen.rodriguez.raposo@usc.es}
\end{center}

\begin{abstract}  
In this paper we introduce the notion of generalized invertible 1-cocycle in a strict braided monoidal category ${\sf C}$, and we prove that the category of Hopf trusses  is equivalent to the category of generalized invertible 1-cocycles. On the other hand,  we also introduce the notions of module for a Hopf truss and for a generalized invertible 1-cocycle. We prove some functorial results involving these categories of modules and we show that the category of modules associated to a generalized invertible 1-cocycle is equivalent to a category of  modules associated to a suitable Hopf truss. Finally, assuming that in ${\sf C}$ we have equalizers, we introduce the notion of Hopf-module in the Hopf truss setting and we obtain the Fundamental Theorem of Hopf modules associated to a Hopf truss.
 \end{abstract} 

\vspace{0.2cm} 

{\footnotesize {\sc Keywords}:  Braided monoidal category, skew truss,  Hopf truss,  generalized invertible 1-cocycle, module, Hopf-module.
}

{\footnotesize {\sc 2020 Mathematics Subject Classification}: 18M15, 16T05. 
}

\section{Introduction}  

The notion of skew brace was introduced recently in  \cite{GV}. This algebraic object consists of two different group structures $(T, \diamond)$ and $(T, \circ)$ on the same set $T$ and that satisfy $\forall a, b, c\in T$ the compatibility condition
$$a\circ (b\diamond c)=(a\circ b)\diamond a^{\diamond}\diamond (a\circ c)$$
where $a^{\diamond}$ denotes the inverse with respect to $\diamond$. Hopf braces were introduced in \cite{AGV} as the linearisation of skew braces and then consist of two structures $(H_1, H_2)$ of Hopf algebras defined on the same object that share a common coalgebra structure. As a consequence of the compatibility condition, the Hopf algebra $H_1$ can be endowed with a structure of module algebra over $H_2$. The relevance of this structure comes through bijective  1-cocycles $\sigma:H_1\to B$, where $B$ is a Hopf algebra that acts on $H_1$ (see \cite{AGV}). In fact, the category of Hopf braces with fixed $H_1$ is equivalent to the category of invertible 1-cocycles $\sigma:H_1\to B$ (see \cite[Theorem 1.12]{AGV}). Thus, Hopf braces are nothing more than coalgebra isomorphisms between Hopf algebras that share the underlying coalgebra and related by a module algebra structure. 

In \cite{BRZ1} T. Brzezi\'nski introduces the notion of skew truss as a generalization of the notion of skew brace in the following way: A skew truss  consist of a group structure $(T, \diamond)$ and a semigroup structure $(T, \circ)$ defined on the same set $T$, and a map (called the cocycle of the skew truss) $\omega:T\to T$ that satisfy $\forall a, b, c\in T$:
$$a\circ (b\diamond c)=(a\circ b)\diamond \omega (a)^{\diamond}\diamond (a\circ c)$$
where $\omega (a)^{\diamond}$ is the inverse in $(T, \diamond)$. This object can be linearised to obtain the so-called Hopf truss $(H_1, H_2, \sigma)$ that consists of a Hopf algebra $H_1$ and a non-unital bialgebra $H_2$ with the same underlying coalgebra structure, and a cocycle $\sigma:H_2\to H_1$ that must satisfy the corresponding compatibility condition. Due to it, $H_1$ can be endowed with a non-unital $H_2$-module algebra structure. Observe that if $H_2$ is a Hopf algebra and $\sigma$ is the identity, the Hopf truss is actually a Hopf brace. In the second section of this  paper we prove that there exists a relation between the categories of Hopf trusses and  skew trusses in the following way (see Theorem \ref{111}):  The category of skew trusses is equivalent to  the full subcategory of Hopf trusses whose  objects are  pointed cosemisimple   Hopf trusses.

Taking into account the equivalences between the categories of Hopf braces an invertible 1-cocycles and with the intention of extending them to the context of Hopf trusses, in the third section of this paper we introduce the notion of generalized invertible 1-cocycle in a strict braided monoidal category ${\sf C}$ as a generalization of the bijective 1-cocycles proposed by I. Angiono, C. Galindo and L. Vendramin in \cite{AGV}. We subsequenty prove in Theorem  \ref{EGIHT}  that the category of Hopf trusses  is equivalent to the category of generalized invertible 1-cocycles. In Section 4 we introduce the notions of module for a Hopf truss and for a generalized invertible 1-cocycle and we prove some functorial results involving these categories of modules. The main result of section 4 is Theorem \ref{pHpi}, where we show that the category of modules associated to a generalized invertible 1-cocycle is equivalent to the category of  modules associated to the corresponding Hopf truss obtained in the previous section. This results are a generalization to the Hopf truss setting of the ones proved for modules associates to Hopf braces in \cite{VRBAMod}.

In the last part of this paper we explore the so-called Fundamental Theorem of Hopf Braces. It is well known that if $H$ is a Hopf algebra in a category ${\mathbb F}$-{\sf Vect} of vector spaces over a field ${\mathbb F}$ and 
 $X$ is an object in this category, then the tensor product $H\ot X$, with the action and coaction induced by the product and the coproduct of $H$, is an object in the category ${\sf H}$-{\sf Hopf-Mod}, i.e. the category of left $H$-Hopf modules. The objects and morphisms of this category are defined in the following way: Let $M$ be a left $H$-module and a left $H$-comodule. In this setting, if for all $m\in M$ and $h\in H$, we write $h.m$ for the left action and  we use the Heyenman-Sweedler notation $\rho_{M}(m)=m_{[0]}\ot m_{[1]}$ for the coaction, then we will say that $M$ is a left $H$-Hopf module if the equality  
$$\rho_{M}(h.m)=h_{(1)}m_{[0]}\ot h_{(2)}.m_{[1]}$$
holds, where $\delta_{H}(h)=h_{(1)}\ot h_{(2)}$ is the coproduct of $H$ and $h_{(1)}m_{[0]}$ is the product in $H$ of $h_{(1)}$ and $m_{[0]}$. A morphism between two left $H$-Hopf modules is a ${\Bbb F}$-linear map that is $H$-linear and $H$-colinear. 

This construction introduced for $H\otimes X$ in the previous paragraph is functorial and, as a consequence,  we have a functor, called the induction functor, $F=H\ot -:{\mathbb F}$-{\sf Vect}$\;\rightarrow {\sf H}$-{\sf Hopf-Mod}. Moreover, for all $M\in H$-{\sf Hopf}, the construction of subobject of coinvariants $M^{co H}=\{m\in M\;/\; m_{[0]}\ot m_{[1]} =1_{H}\otimes m\}$ also is functorial. Thus, there exists a functor of coinvariants $G=(\;\;)^{co H}: {\sf H}$-{\sf Hopf-Mod}$\;\rightarrow {\mathbb F}$-{\sf Vect} such that $F\dashv G$. Moreover, $H\ot M^{co H}$ and $M$ are isomorphic in  ${\sf H}$-{\sf Hopf-Mod} (see \cite{Larson-Sweedler} and \cite{Sweedler}) and $F$ and $G$ induces an  equivalence between  the categories ${\sf H}$-{\sf Hopf-Mod} and  ${\mathbb F}$-{\sf Vect}. The existence of the isomorphism between $H\ot M^{co H}$ and $M$  is the main statement of the Fundamental Theorem of Hopf modules.  

The Fundamental Theorem of Hopf modules and the categorical equivalence of the previous paragraph  remain valid for weak Hopf algebras, Hopf quasigroups, weak Hopf quasigroups and in \cite{RGON} we can find that which can also be obtained for Hopf braces. In the last section of this paper, assuming that the braided monoidal category ${\sf C}$ admits equalizers, we extend the previous results to the Hopf truss setting, i.e. we introduce the notion of Hopf-module associated to a Hopf truss, we obtain the Fundamental Theorem of Hopf modules associated to a Hopf truss and we prove that there exists a categorical equivalence as in the case of Hopf algebras, weak Hopf algebras, Hopf quasigroups, weak Hopf quasigroups, Hopf braces, etc.

\section{Hopf trussses} 

Throughout this paper ${\sf  C}$ denotes a strict braided monoidal category with tensor product $\ot$, unit object $K$ and braiding $c$. Recall that a monoidal category is a category ${\sf  C}$ together with a functor $\ot :{\sf  C}\times {\sf  C}\rightarrow {\sf  C}$, called tensor product, an object $K$ of ${\sf C}$, called the unit object, and  families of natural isomorphisms 
$$a_{M,N,P}:(M\ot N)\ot P\rightarrow M\ot (N\ot P),\;\;\;r_{M}:M\ot K\rightarrow M, \;\;\; l_{M}:K\ot M\rightarrow M,$$
in ${\sf  C}$, called  associativity, right unit and left unit constraints, respectively, satisfying the Pentagon Axiom and the Triangle Axiom, i.e.,
$$a_{M,N, P\ot Q}\co a_{M\ot N,P,Q}= (id_{M}\ot a_{N,P,Q})\co a_{M,N\ot P,Q}\co (a_{M,N,P}\ot id_{Q}),$$
$$(id_{M}\ot l_{N})\co a_{M,K,N}=r_{M}\ot id_{N},$$
where for each object $X$ in ${\sf  C}$, $id_{X}$ denotes the identity morphism of $X$ (see  \cite{Mac}). A monoidal category is called strict if the constraints of the previous paragraph are identities. It is a well-known  fact (see for example \cite{Christian}) that every non-strict monoidal category is monoidal equivalent to a strict one. This lets us to treat monoidal categories as if they were strict and, as a consequence, the results proved in a strict setting hold for every non-strict  monoidal category, for example the category ${\mathbb F}$-{\sf Vect} of vector spaces over a field ${\mathbb F}$,  the category $R$-{\sf Mod} of left modules over a commutative ring $R$ , or {\sf Set} the category of sets.

For simplicity of notation, given objects $M$, $N$, $P$ in ${\sf  C}$ and a morphism $f:M\rightarrow N$,  we will write $P\ot f$ for $id_{P}\ot f$ and $f \ot P$ for $f\ot id_{P}$.

A braiding for a strict monoidal category ${\sf  C}$ is a natural family of isomorphisms 
$$c_{M,N}:M\ot N\rightarrow N\ot M$$ subject to the conditions 
$$
c_{M,N\ot P}= (N\ot c_{M,P})\co (c_{M,N}\ot P),\;\;
c_{M\ot N, P}= (c_{M,P}\ot N)\co (M\ot c_{N,P}).
$$

A strict braided monoidal category ${\sf  C}$ is a strict monoidal category with a braiding.  Note that, as a consequence of the definition, the equalities $c_{M,K}=c_{K,M}=id_{M}$ hold, for all object  $M$ of ${\sf  C}$.  If the braiding satisfies that  $c_{N,M}\co c_{M,N}=id_{M\ot N},$ for all $M$, $N$ in ${\sf  C}$, then we will say that ${\sf C}$  is symmetric. In this case, we call the braiding $c$ a symmetry for the category ${\sf  C}$.

\begin{definition}
A monoid in ${\sf  C}$ is a triple $A=(A, \eta_{A}, \mu_{A})$ where $A$ is an object in ${\sf  C}$ and
$\eta_{A}:K\rightarrow A$ (unit), $\mu_{A}:A\otimes A
\rightarrow A$ (product) are morphisms in ${\sf  C}$ such that
$\mu_{A}\circ (A\otimes \eta_{A})=id_{A}=\mu_{A}\circ
(\eta_{A}\otimes A)$ (unit property) and  $\mu_{A}\circ (A\otimes \mu_{A})=\mu_{A}\circ
(\mu_{A}\otimes A)$ (associative property) hold. 

Given two monoids $A= (A, \eta_{A}, \mu_{A})$
and $B=(B, \eta_{B}, \mu_{B})$, a morphism  $f:A\rightarrow B$ in {\sf  C} is a monoid morphism if $\mu_{B}\circ (f\otimes f)=f\circ \mu_{A}$ ($f$ is multiplicative) and $ f\circ
\eta_{A}= \eta_{B}$ ($f$ preserves the unit). 
		
If  $A$, $B$ are monoids in ${\sf  C}$, then the tensor product
$A\otimes B$ is also an algebra in
${\sf  C}$ where
$\eta_{A\otimes B}=\eta_{A}\otimes \eta_{B}$ and $\mu_{A\otimes B}=(\mu_{A}\otimes \mu_{B})\circ (A\otimes c_{B,A}\otimes B).$

We will say that a monoid $A$ is commutative if $\mu_{A}\circ c_{A,A}=\mu_{A}$.
\end{definition}

\begin{definition}
A comonoid  in ${\sf  C}$ is a triple ${D} = (D,
\varepsilon_{D}, \delta_{D})$ where $D$ is an object in ${\sf
C}$ and $\varepsilon_{D}: D\rightarrow K$ (counit),
$\delta_{D}:D\rightarrow D\otimes D$ (coproduct) are morphisms in
${\sf  C}$ such that $(\varepsilon_{D}\otimes D)\circ
\delta_{D}= id_{D}=(D\otimes \varepsilon_{D})\circ \delta_{D}$ (counit property) and 
$(\delta_{D}\otimes D)\circ \delta_{D}=(D\otimes \delta_{D})\circ \delta_{D}$ (coassociative property) hold.

If ${D} = (D, \varepsilon_{D},
\delta_{D})$ and
${ E} = (E, \varepsilon_{E}, \delta_{E})$ are comonoids, then a morphism 
$f:D\rightarrow E$ in  {\sf  C} is a comonoid morphism if $(f\otimes f)\circ
\delta_{D} =\delta_{E}\circ f$ ($f$ is comultiplicative) and $\varepsilon_{E}\circ f
=\varepsilon_{D}$ ($f$ preserves the counit).
		
Given  $D$, $E$ comonoids in ${\sf  C}$, the tensor product $D\otimes E$ is a comonoid in ${\sf  C}$ where $\varepsilon_{D\otimes
E}=\varepsilon_{D}\otimes \varepsilon_{E}$ and $\delta_{D\otimes
E}=(D\otimes c_{D,E}\otimes E)\circ( \delta_{D}\otimes \delta_{E}).$

We will say that a comonoid $D$ is cocommutative if $\delta_{D}=c_{D,D}\circ \delta_{D}.$
\end{definition}

\begin{definition}
Let $D=(D,\varepsilon_{D}, \delta_{D})$ be a comonoid in {\sf C}. We will say that a morphism $g:K\rightarrow D$ is a grouplike morphism if satisfy
\begin{equation}
\label{gl1}
\delta_{D}\circ g=g\otimes  g
\end{equation}
and 
\begin{equation}
\label{gl2}
\varepsilon_{D}\circ g=id_{K}.
\end{equation}

\end{definition}

\begin{remark}
In the category of vector spaces over a field ${\mathbb F}$ we can find interesting examples of comonoids. For example, if $S$ is a set, then with ${\mathbb F}[S]$ we will denote the free ${\mathbb F}$-vector space on $S$, i.e.,  
$${\mathbb F}[S]=\displaystyle \bigoplus_{s\in S}{\mathbb F}s.$$

This vector space has a comonoid structure determined by 
\begin{equation}
\label{freeS}
\delta_{{\mathbb F}[S]}(s)=s\otimes  s, \;\;\;\; \varepsilon_{{\mathbb F}[S]}(s)=1_{\mathbb F}.
\end{equation}

Let $(C, \varepsilon_{C}, \delta_{C})$ be a comonoid in ${\mathbb F}$-{\sf Vect}. A grouplike element $c$ of $C$ is an element  $c\in C$ such that the linear map $g_{c}:{\mathbb F}\rightarrow C$ defined by $g_{c}(1_{{\mathbb F}})=c$ is a grouplike morphism in ${\mathbb F}$-{\sf Vect}. Therefore, $c\in C$ is a grouplike element if  (\ref{freeS}) holds for $c$, $\varepsilon_{C}$ and $\delta_{C}$. In the following we will denote by ${\sf G}(C)$ the set of grouplike elements of $C$ and it is well known that they are linearly independent \cite[Theorem 2.1.2]{Abe}. If $S$ is a set, then the comonoid ${\mathbb F}[S]$ is called the grouplike comonoid of $S$ and satisfies ${\sf G}({\mathbb F}[S])=S$. Moreover the grouplike comonoid of ${\sf G}(C)$ is a subcomonoid of $C$ ($D$ is a subcomonoid of $C$ if $\delta_{C}(D)\subset D\otimes D$ or, in other words, $D$ is a comonoid with the restriction of the coproduct $\delta_{C}$ and the counit $\varepsilon_{C}$). 

A pointed comonoid in ${\mathbb F}$-{\sf Vect} is a comonoid $C$ whose simple subcomonoids are one-dimensional. Then, $C$ is pointed if and only if its coradical $C_{0}$ (the sum of the simple subcomonoids of $C$)   is the grouplike comonoid of ${\sf G}(C)$, i.e., $C_{0}={\mathbb F}[{\sf G}(C)]$. We will say that the comonoid $C$ is cosemisimple if $C=C_{0}$. Therefore, if $C$ is pointed cosemisimple,  then $C={\mathbb F}[{\sf G}(C)]$. On the other hand, if $G$ is a group and $C={\mathbb F}[G]$, then  we have that $C$ is pointed and cosemisimple. Finally, if ${\mathbb F}$ is algebraically closed and $C$ is cocommutative, then $C$ is pointed.
\end{remark}

\begin{definition}
Let ${D} = (D, \varepsilon_{D},
\delta_{D})$ be a comonoid and let $A=(A, \eta_{A}, \mu_{A})$ be a
monoid. By ${\mathcal  H}(D,A)$ we denote the set of morphisms
$f:D\rightarrow A$ in ${\sf  C}$. With the convolution operation
$$f\ast g= \mu_{A}\circ (f\otimes g)\circ \delta_{D},$$
${\mathcal  H}(D,A)$ is a monoid where the unit element is $\eta_{A}\circ \varepsilon_{D}=\varepsilon_{D}\otimes \eta_{A}$. We will say that $f:D\rightarrow A$ is convolution invertible if there exists $f^{-1}:D\to A$ such that $f \ast f^{-1}=f^{-1}\ast f=\varepsilon\ot \eta$. 
\end{definition}

\begin{definition}
Let  $A$ be a monoid. The pair
$(M,\varphi_{M})$ is a left $A$-module if $M$ is an object in
${\sf  C}$ and $\varphi_{M}:A\otimes M\rightarrow M$ is a morphism
in ${\sf  C}$ satisfying $\varphi_{M}\circ(
\eta_{A}\ot M)=id_{M}$ and 
$$\varphi_{M}\circ (A\ot \varphi_{M})=\varphi_{M}\circ
(\mu_{A}\ot M).$$

Given two left ${A}$-modules $(M,\varphi_{M})$
and $(N,\varphi_{N})$, $f:M\rightarrow N$ is a morphism of left
${A}$-modules if $\varphi_{N}\circ (A\ot f)=f\circ \varphi_{M}$ (left $A$-linearity).  
	
Then left $A$-modules with morphisms of left $A$-modules form a category that we will denote by $\;_{\sf A}${\sf Mod}.

Let $B$ an object in {\sf C}  such that there exists  an associative product $\mu_{B}:B\otimes B\rightarrow B$.   We  will say that $(M,\phi_{M})$ is a non-unital left $B$-module if  $\phi_{M}\circ (B\ot \phi_{M})=\phi_{M}\circ
(\mu_{B}\ot M)$. A morphism between non-unital left $B$-modules is a left $B$-linear morphism as in the case of morphisms for modules over a monoid. Then non-unital left $B$-modules form a category that we will denote by $\;_{\sf B}${\sf mod}.
	
\end{definition}

\begin{definition}
\label{nbimod}
A non-unital bimonoid in the category $\sf C$ is a comonoid $(B,\varepsilon_{B},\delta_{B})$ with an associative product $\mu_{B}:B\otimes B\rightarrow B$ such that $\mu_{B}$ is a comonoid morphism. Then the following identities hold:
\begin{equation}
\label{et-mu-ep}
\varepsilon_{B}\co \mu_{B}=\varepsilon_{B}\ot \varepsilon_{B},
\end{equation}
\begin{equation}
\label{d-delta-et}
\delta_{B}\co \mu_{B}=(\mu_{B}\ot \mu_{B})\co \delta_{B\otimes B}. 
\end{equation}

A bimonoid in ${\sf C}$ is a monoid $(B, \eta_{B}, \mu_{B})$ and a comonoid $(B,\varepsilon_{B},\delta_{B})$ such that $\eta_{B}$ and $\mu_{B}$ are comonoid morphisms. Then, (\ref{et-mu-ep}),  (\ref{d-delta-et}), 
\begin{equation}
\label{et-mu-ep1}
\varepsilon_{B}\co \eta_{B}=id_{K},
\end{equation}
and 
\begin{equation}
\label{d-delta-et1}
\delta_{B}\co \eta_{B}=\eta_{B}\ot \eta_{B}
\end{equation}
hold.
 		
A morphism between non-unital bimonoids $H$ and $B$ is a morphism $f:H\rightarrow B$ in ${\sf C}$ of comonoids and multiplicative.  A morphism between bimonoids $H$ and $B$ is a morphism $f:H\rightarrow B$ in ${\sf C}$ of monoids and comonoids.

With the composition of morphisms in {\sf C} we can define a category whose objects are non-unital bimonoids (bimonoids)  and whose morphisms are morphisms of non-unital bimonoids (bimonoids). We denote this category by ${\sf  bimod}$ (${\sf  BiMod}$).

\end{definition}

\begin{definition}
Let $H$ be a bimonoid in ${\sf C}$.  If there exists a morphism $\lambda_{H}:H\rightarrow H$ in ${\sf  C}$,
called the antipode of $H$, satisfying that $\lambda_{H}$ is the inverse of $id_{H}$ in ${\mathcal  H}(H,H)$, i.e., 
\begin{equation}
\label{antipode}
id_{H}\ast \lambda_{H}= \eta_{H}\circ \varepsilon_{H}= \lambda_{H}\ast id_{H},
\end{equation}
we say that $H$ is a Hopf monoid. 

A morphism of Hopf monoids is an bimonoid morphism. With the composition of morphisms in {\sf C} we can define a category whose objects are  Hopf monoids  and whose morphisms are morphisms of Hopf monoids. We denote this category by ${\sf  Hopf}$.
\end{definition}

\begin{remark}
If $H$ is a Hopf monoid,  then the antipode is antimultiplicative and anticomultiplicative 
$$
\label{a-antip}
\lambda_{H}\co \mu_{H}=  \mu_{H}\co (\lambda_{H}\ot \lambda_{H})\co c_{H,H},\;\;\;\; \delta_{H}\co \lambda_{H}=c_{H,H}\co (\lambda_{H}\ot \lambda_{H})\co \delta_{H}, 
$$
and leaves the unit and counit invariant, i.e., 
$$
\label{u-antip}
\lambda_{H}\co \eta_{H}=  \eta_{H},\;\; \varepsilon_{H}\co \lambda_{H}=\varepsilon_{H}.
$$
A Hopf monoid is commutative if it is commutative as monoid and cocommutative if it is cocommutative as comonoid. It is easy to see that in both cases $\lambda_{H}\circ \lambda_{H} =id_{H}$.

Note that, if $f:H\rightarrow D$ is a Hopf monoid morphism, then  the following equality holds:
\begin{equation}
\label{morant}
\lambda_{D}\co f=f\co \lambda_{H}.
\end{equation}
\end{remark}

\begin{remark}
\label{t-exe}
In the category {\sf Set} the tensor product is the cartesian product and the unit element is a set $\{{\star}\}$ with an unique element $\star$. 
Taking all this into account, we have that a set  $T$ is a non-unital bimonoid in {\sf Set} if and only if  $T$ is a semigroup in {\sf Set} with product $\diamond$. If $(T,\diamond)$ is a semigroup, then the non-unital bimonoid structure is the following: the product is the one induced by $\diamond$, the coproduct is defined by $\delta_{T}(a)=(a,a)$ and the counit by $\varepsilon_{T}(a)=\star$. Moreover, $T$ is a bimonoid in {\sf Set} if and only if $T$ is a monoid in {\sf Set} with product $\diamond$ and unit $1_{\diamond}$. In this case  we define the non-unital bimonoid structure as in the case of semigroups and the unit is defined by $\eta_{T}(\star)=1_{\diamond}$. Finally, $T$ is a Hopf monoid in  {\sf Set} if and only if it is a group. The bimonoid structure is defined as for monoids and the antipode by $\lambda_{T}(a)=a^{\diamond}$ where  $a^{\diamond}$ is the inverse of $a$ in $T$.

On the other hand, if ${\mathbb F}$ is a field and $(T, \diamond)$ is a semigroup in {\sf Set}, then the direct sum $${\mathbb F}[T]=\displaystyle \bigoplus_{a\in T}{\mathbb F}a$$ 
is a non-unital bimonoid in ${\mathbb F}$-{\sf Vect}  where  $\mu_{{\mathbb F}[T]}$ is the unique linear map such that  $\mu_{{\mathbb F}[T]} (a\otimes b)=a\diamond b$ and the comonoid structure is the one defined in (\ref{freeS}). Also, if $(T, \diamond)$ is a monoid in {\sf Set} with unit $1_{\diamond}$, then ${\mathbb F}[T]$ 
is a bimonoid in ${\mathbb F}$-{\sf Vect},  where $\eta_{{\mathbb F}[T]}$ is the   unique linear map such that $\eta_{{\mathbb F}[T]} (1_{\mathbb F})=1_{\diamond}$, and the non-unital bimonoid structure is the one introduced for semigroups. Finally, if $(T, \diamond)$ is a group, then ${\mathbb F}[T]$ is a Hopf monoid  in ${\mathbb F}$-{\sf Vect} with the previous bimonoid structure and antipode  the unique linear map satisfying $\lambda_{{\mathbb F}[T]}(a)=a^{\diamond}$, where $a^{\diamond}$ denotes the inverse of $a$. 

Let $(T, \diamond)$, $(S,\circ )$ be semigroups in {\sf Set} and let $f:T\rightarrow S$ be a semigroup morphism in {\sf Set}. Then, if ${\mathbb F}[f]$ denotes the linear extension of $f$, then ${\mathbb F}[f]$ is a non-unital bimonoid morphism between 
${\mathbb F}[T]$ and ${\mathbb F}[S]$ in ${\mathbb F}$-{\sf Vect}. The same property holds for a morphisms of monoids $f$ in {\sf Set}, i.e. ${\mathbb F}[f]$ is a bimonoid morphism,  and for a morphism $f$ of groups, i.e. ${\mathbb F}[f]$ is a group morphism.

Therefore, if {\sf sGpr} is the category of semigroups in {\sf Set}, {\sf Mon} is the category of monoids in {\sf Set}  and {\sf Gpr} denotes the category of groups, then there exists three functors $${\sf L}_{sg}: {\sf sGpr}\;\rightarrow {\sf bimon}, \;\;\;\;\; {\sf L}_{m} : {\sf Mon}\;\rightarrow {\sf BiMon}, \;\;\;\;\; {\sf L}_{g} : {\sf Gpr}\;\rightarrow {\sf Hopf},$$ where  {\sf bimon} is the category of non-unital bimonoids in ${\mathbb F}$-{\sf Vect}, {\sf BiMon} is the category of  bimonoids in ${\mathbb F}$-{\sf Vect} and, finally, {\sf Hopf} denotes the category of Hopf monoids in ${\mathbb F}$-{\sf Vect}. In this setting, the functor {\sf L}$_{m}$ is the restriction of {\sf L}$_{sg}$ to the category of monoids in ${\sf Set}$ and {\sf L}$_{g}$ is the restriction of {\sf L}$_{m}$ to the category of groups.

In any case non-unital bimonoids, bimonoids and Hopf monoids in ${\mathbb F}$-{\sf Vect} are comonoids. Taking this into account,  if $B$ is an object in ${\sf bimon}$, then it is possible to define a semigroup structure on ${\sf G}(B)$, with product $\diamond$ induced by $\mu_{B}$ ($a\diamond b = \mu_{B}(a\otimes b)$). Moreover, if $f:B\to B'$ is a morphism of non-unital bimonoids, then the  image of the restriction of $f$ to ${\sf G}(B)$ lies into ${\sf G}(B')$. Thus we have a functor ${\sf G}_{sg}$ between ${\sf bimon}$ and ${\sf sGpr}$ defined by ${\sf G}_{sg}(B)={\sf G}(B)$ on objects and by ${\sf G}_{sg}(f)={\sf G}(B)$ on morphisms. Also, if $A$ is an object in ${\sf BiMon}$, then it is possible to define a monoid structure on ${\sf G}(A)$, with product $\diamond$ as in the case of non-unital bimonoids and unit $1_{\diamond}$ induced by $\eta_{A}$ ($1_{\diamond} = \eta_{A}(1_{\mathbb F})$). Moreover, if $f:A\to A'$ is a morphism of bimonoids, then the  image of the restriction of $f$ to ${\sf G}(A)$ lies into ${\sf G}(A')$. Then, we have a new  functor ${\sf G}_{m}$ between ${\sf BiMon}$ and ${\sf Mon}$ defined by ${\sf G}_{m}(A)={\sf G}(A)$ on objects and by ${\sf G}_{m}(f)={\sf G}(f)$ on morphisms. Finally, If $H$ is a Hopf monoid, then ${\sf G}(H)$ is a group where the monoid structure is the one defined for bimonoids and the inverse is defined by the antipode, i.e., the inverse of $h\in {\sf G}(H)$ is $\lambda_{H}(h)$. As in the two previous cases, this construction works well with morphisms of Hopf monoids and, as a consequence, there exists a  functor ${\sf G}_{g}$ between ${\sf Hopf}$ and ${\sf Gpr}$ defined by ${\sf G}_{g}(H)={\sf G}(H)$ on objects and by ${\sf G}_{g}(f)={\sf G}(f)$ on morphisms. 

It is true that ${\sf L}_{sg}\dashv {\sf G}_{sg}$, ${\sf L}_{m}\dashv {\sf G}_{m}$ and ${\sf L}_{g}\dashv {\sf G}_{g}$. In the three adjunctions the unit of every one of them is the identity. Thus, the first  adjoint pair induces an equivalence of categories between {\sf sGpr} and the full subcategory of  ${\sf bimon}$ of all pointed cosemisimple  non-unital bimonoids in ${\mathbb F}$-{\sf Vect}, the second one induces an equivalence of categories between {\sf Mon} and the full subcategory of  ${\sf BiMon}$ of all pointed cosemisimple  bimonoids in ${\mathbb F}$-{\sf Vect}, and the third one induces an equivalence of categories between {\sf Gpr} and the full subcategory of  ${\sf Hopf}$ of all pointed cosemisimple  non-unital Hopf monoids in ${\mathbb F}$-{\sf Vect}. Also we have this commutative diagrams 

$$
\setlength{\unitlength}{4mm}
\begin{picture}(35,7.5)

\put(3,6.5){\vector(1,0){6}}
\put(9,5.5){\vector(-1,0){6}}
\put(2,5){\vector(0,-1){2}}
\put(11.5,5){\vector(0,-1){2}}
\put(3.2,2.5){\vector(1,0){6}}
\put(9.2,1.5){\vector(-1,0){6}}

\put(23,6.5){\vector(1,0){6}}
\put(29,5.5){\vector(-1,0){6}}
\put(22,5){\vector(0,-1){2}}
\put(31.5,5){\vector(0,-1){2}}
\put(23.2,2.5){\vector(1,0){6}}
\put(29.2,1.5){\vector(-1,0){6}}

\put(6.5,7.2){\makebox(0,0){$ {\sf L}_{m}$}}
\put(6.5,4.8){\makebox(0,0){$ {\sf G}_{m}$}}
\put(6.5,6){\makebox(0,0){$\perp$}} 
\put(2,6){\makebox(0,0){${\sf Mon}$ }}
\put(11.75,6){\makebox(0,0){${\sf BiMon}$}}

\put(1,4.2){\makebox(0,0){${\sf I_{m}}$}}
\put(13,4.2){\makebox(0,0){${\sf I_{b}}$}}

\put(2,2){\makebox(0,0){${\sf sGrp}$}}
\put(11.5,2){\makebox(0,0){${\sf bimon}$}}
\put(6.5,0.8){\makebox(0,0){${\sf G}_{sg}$}}
\put(6.5,2){\makebox(0,0){$\perp$}} 
\put(6.5,3.1){\makebox(0,0){${\sf L}_{sg}$}}

\put(26.5,7.2){\makebox(0,0){$ {\sf L}_{g}$}}
\put(26.5,4.8){\makebox(0,0){$ {\sf G}_{g}$}}
\put(26.5,6){\makebox(0,0){$\perp$}} 
\put(22,6){\makebox(0,0){${\sf Grp}$ }}
\put(31.75,6){\makebox(0,0){${\sf Hopf}$}}

\put(21,4.2){\makebox(0,0){${\sf I_{g}}$}}
\put(33,4.2){\makebox(0,0){${\sf I_{h}}$}}

\put(22,2){\makebox(0,0){${\sf Mon}$}}
\put(31.5,2){\makebox(0,0){${\sf BiMon}$}}
\put(26.5,0.8){\makebox(0,0){${\sf G}_{m}$}}
\put(26.5,2){\makebox(0,0){$\perp$}} 
\put(26.5,3.1){\makebox(0,0){${\sf L}_m$}}

\end{picture}
$$

where ${\sf I_{m}}$ and ${\sf I_{b}}$ and ${\sf I_{g}}$ denote the corresponding inclusion functors.

\end{remark}

\begin{definition}
\label{modmon}
Let $B$ a  bimonoid and let $A$ be a monoid in {\sf C}. We will say that $(A, \phi_{A})$ is a left $B$-module monoid if it is a  left $B$-module with action $ \varphi_{A}:B\otimes A\rightarrow A$ such that 
\begin{equation}
\label{bmm1}
\varphi_{A}\co (B\ot \eta_{A})=\varepsilon_{B}\otimes \eta_{A}
\end{equation}
and
\begin{equation}
\label{bmm2}	 
\varphi_{A}\co (B\otimes \mu_{A})= \mu_{A}\circ (\varphi_{A}\otimes \varphi_{A})\circ (B\otimes c_{B,A}\otimes A)\circ (\delta_{B}\otimes A\otimes A)
\end{equation} 
hold.
	
If $B$ is a non-unital bimonoid, then we will say that  $(A, \phi_{A})$ is a  non-unital left $B$-module monoid if   $(A, \phi_{A})$ is a non-unital left $B$-module and (\ref{bmm1}) and (\ref{bmm2}) hold.
\end{definition}

The notion of Hopf truss was introduced by T. Brzezi\'nski in \cite{BRZ1} in the category ${\mathbb F}$-{\sf Vect} as the linearisation of the notion of skew truss. In the monoidal setting the definition of Hopf truss is the following:

\begin{definition}
\label{H-truss}
 Let $(H, \varepsilon_{H}, \delta_{H})$ be a comonoid in {\sf C}. Assume that  there are a monoid structure $(H, \eta_{H}, \mu_{H}^1)$, a product $\mu_{H}^2:H\otimes H\rightarrow H$ and  two endomorphism of $H$ denoted by $\lambda_{H}$ and $\sigma_{H}$. We will say that 
$$(H, \eta_{H}, \mu_{H}^1, \mu_{H}^2, \varepsilon_{H}, \delta_{H}, \lambda_{H}, \sigma_{H})$$
is a Hopf truss if:
\begin{itemize}
\item[(i)] $H_{1}=(H, \eta_{H}, \mu_{H}^1,  \varepsilon_{H}, \delta_{H}, \lambda_{H})$ is a Hopf monoid in {\sf C}.
\item[(ii)] $H_{2}=(H,  \mu_{H}^2, \varepsilon_{H}, \delta_{H})$ is a non-unital bimonoid in {\sf C}.
\item[(iii)] The morphism $\sigma_{H}$ is a comonoid morphism and the following equality holds:
$$\mu_{H}^2\co (H\ot \mu_{H}^1)=\mu_{H}^1\co (\mu_{H}^2\ot \Gamma_{H_{1}}^{\sigma_{H}} )\co (H\ot c_{H,H}\ot H)\co (\delta_{H}\ot H\ot H),$$
\end{itemize}
where  $$\Gamma_{H_{1}}^{\sigma_{H}}=\mu_{H}^1\co ((\lambda_{H}\co \sigma_{H})\ot \mu_{H}^2)\co (\delta_{H}\ot H).$$

We will say that a Hopf truss is cocommutative if the comonoid $(H, \varepsilon_{H}, \delta_{H})$ is cocommutative.

Note that, a Hopf truss is a Hopf brace in the sense of I. Angiono, C. Galindo and L. Vendramin (see \cite{AGV}) if $\sigma_{H}$ is the identity and there exists a morphism $S_{H}:H\rightarrow H$ such that $H_{2}=(H, \eta_{H}, \mu_{H}^2, \varepsilon_{H}, \delta_{H}, S_{H})$ is a Hopf monoid. 

\end{definition}

\begin{notation}
{\rm Given a Hopf truss, we will denote it by ${\mathbb H}=(H_{1}, H_{2},  \sigma_{H})$. The morphism $\sigma_{H}$ is called the cocycle of ${\mathbb H}$.  Also, we will denote a Hopf brace by ${\mathbb H}=(H_{1}, H_{2})$.
}
\end{notation}

The proofs that we can find in Section 6 of \cite{BRZ1} can be replicated in the  braided monoidal setting since they do not depend on the symmetry of the category ${\mathbb F}$-{\sf Vect}. Then we have the following properties: By  \cite[Lemma 6.2 ]{BRZ1}  the cocycle  $\sigma_{H}$ of a Hopf truss  ${\mathbb H}$ in ${\sf C}$ is fully determined by $\eta_{H}$ and the product $\mu_{H}^2$ in the following way:
\begin{equation}
\label{cocycle}
\sigma_{H}=\mu_{H}^2\circ (H\otimes  \eta_{H}).
\end{equation}

Then, as a consequence of the associativity for the product $\mu_{H}^2$, we have that 
\begin{equation}
\label{cocycle1}
\sigma_{H}\circ \mu_{H}^2=\mu_{H}^2\circ (H\otimes  \sigma_{H})
\end{equation}
holds. Finally, by \cite[Theorem 6.5]{BRZ1} we know that the monoid $H_{1}$ is a non-unital left $H_{2}$-module monoid for the action $\Gamma_{H_{1}}^{\sigma_{H}}.$ 

\begin{definition}
\label{mor}
{\rm  Given two Hopf trusses ${\mathbb H}$  and  ${\mathbb B}$, a morphism $f$ between the two underlying objects is called a morphism of Hopf trusses if  $f:H_{1}\rightarrow B_{1}$ is a Hopf monoid morphism and $f:H_{2}\rightarrow B_{2}$ is a morphism of non-unital bimonoids. Then by \cite[Proposition 6.8]{BRZ1}
\begin{equation}
\label{mortruss}
\sigma_{B}\circ f=f\circ \sigma_{H}
\end{equation} 
holds.

Hopf trusses  together with morphisms of Hopf trusses form a category which we denote by {\sf HTr}. It is obvious that Hopf braces with morphisms of Hopf braces form a category which we denote by {\sf HBr} that is a subcategory of ${\sf HTr}$.  
}
\end{definition}

\begin{example}
\label{EXHT1}
Given a  Hopf monoid $D=(D,\eta_{D},\mu_{D},\varepsilon_{D},\delta_{D},\lambda_{D})$ and a comonoid endomorphism $q:D\rightarrow D$ satisfying 
\begin{equation}\label{condBidemp}
\mu_{D}\circ(q\otimes q)=q\circ\mu_{D}\circ (q\otimes D), 
\end{equation}
we have that  ${\mathbb D}_{q}=(D, D_{q}, \sigma_{D}^{q})$ is a Hopf truss where  $D_{q}$ is the non-unital bimonoid 
\begin{gather*}
D_{q}=(D,\mu_{D_q}=\mu_{D}\circ (q\otimes D),\varepsilon_{D},\delta_{D}),
\end{gather*}
and $\sigma_{D}^q=q$. Note that in this case  $\Gamma_{D}^q=\varepsilon_{D}\otimes D$.  
		
In this setting,  a family of morphisms $q\colon D\rightarrow D$ satisfying \eqref{condBidemp} is made up of idempotent Hopf monoid endomorphisms of $D$. So, we can construct examples of Hopf trusses   working with idempotent Hopf monoid endomorphisms of $D$. For example, if $D$ is Hopf monoid  that factorizes by the semidirect product of two Hopf monoids $A$, $H$ and $\omega_{D}:A\ltimes H\rightarrow  D$ is the corresponding isomorphism of Hopf monoids, then 
$$q=\omega_{D}\circ (\eta_{A}\otimes ((\varepsilon_{A}\otimes H)\circ \omega_{D}^{-1}):D\rightarrow D$$
is an idempotent morphism of Hopf monoids.  Remember that, in the particular case of groups (Hopf monoids in {\sf Set}), it  is well known that the set of  idempotent endomorphisms $q$ of a group $D$ are in one-to-one correspondence with the semidirect-product decompositions $A\ltimes H$ of $D$ where $A=Ker(f)$, $H= q(D)$ and $\varphi_{A}: H\times A\rightarrow A$ is the adjoint action of $H$ on $A$, i.e.,   $\varphi_A(h,a)=h a h^{-1}$.  The product on $A\ltimes H$ is defined by $(a,h)(b,l)=(a\varphi_{A}(h,b), hl)$.
		
\end{example}
	
\begin{example}
\label{EXHT2}
Let ${\mathbb H}=(H_{1}, H_{2},  \sigma_{H})$ be a Hopf truss. Assume that there exists a comonoid endomorphism $q:H\rightarrow H$ satisfying
\begin{equation}\label{condBidemp1}
\mu_{H}^2\circ(q\otimes q)=q\circ\mu_{H}^2\circ (q\otimes H).
\end{equation}

Then, ${\mathbb H}^{q}=(H_{1}, H_{2}^q, \sigma_{H}^q)$  is a Hopf truss where $\mu_{H_{2}^q}=\mu_{H}^2\circ (q\otimes H)$ and $\sigma_{H}^q=\sigma_{H}\circ q$.  Note that in this case $\Gamma_{H_{1}}^{\sigma_{H}^q}=\Gamma_{H_{1}}^{\sigma_{H}}\circ (q\otimes H)$.

For example, if ${\mathbb H}=(H_{1}, H_{2})$ is a Hopf brace and $q:H_{2}\rightarrow H_{2}$ is an idempotent morphism of Hopf monoids, then (\ref{condBidemp1}) holds and, as a consequence, ${\mathbb H}^{q}=(H_{1}, H_{2}^{q}, \sigma_{\sigma_{H}^q})$ is a Hopf truss where $H_{2}^{q}=(H,\mu_{H_{2}^q}, \varepsilon_{H}, \delta_{H} )$ is the associated non-unital bimonoid. 
\end{example}

\begin{example}
\label{EXHT3}
Following  \cite[Definition 3.5]{LW2}, given a  cocommutative Hopf monoid $$D=(D,\eta_{D},\mu_{D},\varepsilon_{D},\delta_{D},\lambda_{D})$$ and a Hopf monoid endomorphism $\phi\colon D\rightarrow D$, a $\phi$-twisted operator is a comonoid morphism $\Upsilon \colon D\rightarrow D$ such that the equation
\begin{equation*}
\mu_{D}\circ(\Upsilon\otimes\Upsilon)=\Upsilon\circ\mu_{D}\circ((\mu_{D}\circ(\Upsilon\otimes D))\otimes (\lambda_{D}\circ \phi\circ \Upsilon))\circ(D\otimes c_{D,D})\circ(\delta_{D}\otimes D
\end{equation*}
holds.  By \cite[Proposition 3.6]{LW2}, if $\Upsilon\colon D\rightarrow D$ is a $\phi$-twisted operator, then the triple $(D,\Upsilon,\phi\circ \Upsilon)$ is a Rota-Baxter system (see  \cite[Definition 3.1]{LW2})  and, as a consequence, applying \cite[Proposition 3.8]{LW2} it is obtained that $\mathbb{D}_{\Upsilon}=(D,D_{\Upsilon},\sigma_{D}^{\Upsilon})$ is a cocommutative Hopf truss where
\begin{gather*}
\sigma_{D}^{\Upsilon}:=\mu_{D}\circ (\Upsilon\otimes(\lambda_{D}\circ\phi\circ \Upsilon))\circ\delta_{D}=\Upsilon\ast(\lambda_{D}\circ\phi\circ \Upsilon)
\end{gather*}
and $H_{\Upsilon}$ is the non-unital bimonoid
\begin{gather*}
H_{\Upsilon}=(H,\mu_{\Upsilon},\varepsilon_{D},\delta_{D}),
\end{gather*}
being
\begin{gather*}
\mu_{\Upsilon}:=\mu_{D}\circ((\mu_{D}\circ(\Upsilon\otimes D))\otimes(\lambda_{D}\circ \phi\circ \Upsilon))\circ(D\otimes c_{D,D})\circ(\delta_{D}\otimes D).
\end{gather*}
\end{example}

\begin{remark}
\label{ex-T}
Following \cite{BRZ1} a skew truss is a set $T$ with two binary operations $\diamond_{1} $ and  $\diamond_{2}$ and a map $\omega_{T} :T\rightarrow T$ (called the cocycle) such that the pair $T_{1}=(T, \diamond_{1})$ is a group with unit $1_{\diamond_{1}}$, $T_{2}=(T, \diamond_{2})$ is a semigroup  and the following identity 
\begin{equation}
\label{dia-dia}
a\diamond_{2} (b\diamond_{1} c)=
(a\diamond_{2} b)\diamond_{1} \omega(a)^{\diamond_{1}}\diamond_{1} (a\diamond_{2} c)
\end{equation}
holds for all $a,b,c\in T$.  We will denote the previous skew truss by ${\mathbb T}=(T_{1}, T_{2}, \omega_{T})$.   A morphism $f$ between two skew trusses ${\mathbb T}=(T_{1}, T_{2},  \omega_{T})$ and ${\mathbb S}=(S_{1}, S_{2},  \omega_{S})$ is a map $f$ between the two underlying sets such that $f$ is a morphism of groups between $T_{1}$ and $S_{1}$ and of semigroups between $T_{2}$ and $S_{2}$. Then, by \cite[Proposition 2.8]{BRZ1}, the equality  $\omega_{S}\circ f=f\circ \omega_{T}$ holds. With {\sf SkTr} we will denote the category of skew trusses.  Note that, taking into account the previous lines  {\sf SkTr}={\sf HTr} in the category {\sf Set}.

Let ${\mathbb F}$ be a field. Let ${\mathbb T}$ be a skew truss. Then ${\mathbb F}[T]$ admits a structure of Hopf truss in ${\mathbb F}$-{\sf Vect} where the products, coproduct, counit and antipode are defined as in Remark \ref{t-exe} and the comonoid morphism $\sigma_{{\mathbb F}[T]}$ is the linear extension of $\omega_{T}$. Also, if $f$ is a morphism between skew trusses, then its linear extension is a morphism of Hopf trusses.  As a consequence, there exists a  functor $${\sf P}_{st}: {\sf SkTr} \rightarrow  {\sf HTr}$$ 
given by
$${\sf P}_{st}({\mathbb T})=({\sf L}_{g}(T_{1}), {\sf L}_{sg}(T_{2}),  \sigma_{{\mathbb F}[T]})$$
where ${\sf L}_{g}$, ${\sf L}_{sg}$ are the functors defined in Remark \ref{t-exe} and $\lambda_{{\mathbb F}[T]}=(\;)^{\diamond_{1}}$.

Let ${\mathbb F}$ be a field and ${\mathbb H}=(H_{1}, H_{2},  \sigma_{H})$ a Hopf truss in 
${\mathbb F}$-{\sf Vect}.  Let ${\sf G}(H)$ be the set of grouplike elements of the comonoid $(H,\varepsilon_{H} ,\delta_{H})$. 
In light of Remark \ref{t-exe}, on the one hand we have that ${\sf G}(H)$ is a group. On the other hand, ${\sf G}(H)$ is a semigroup. As $\sigma_{H}$ is a comonoid morphism, $\sigma_{H}(h)\in {\sf G}(H)\; \forall\; h\in {\sf G}(H)$. If we denote by $\omega_{\sigma_{H}}$ the restriction of $\sigma_{H}$ to ${\sf G}(H)$, then we have that $({\sf G}_{g}(H_{1}), {\sf G}_{sg}(H_{2}),  \omega_{\sigma_{H}})$ is an object in  {\sf SkTr} because equality \ref{dia-dia} holds as a consequence of (iii) of Definition \ref{H-truss}. By the functoriality of ${\sf G}_{g}$ and ${\sf G}_{sg}$ any morphism $f:{\mathbb H} \to {\mathbb B}  $ of Hopf trusses induces a morphism of skew trusses between $({\sf G}_{g}(H_1), {\sf G}_{sg}(H_2), \omega_{\sigma_{H}})$ and  $({\sf G}_{g}(B_1), {\sf G}_{sg}(B_2), \omega_{\sigma_{B}})$ defined by ${\sf G}_{g}(f)$ or by  ${\sf G}_{sg}(f)$. Therefore $${\sf R}_{ht}: {\sf HTr}\rightarrow {\sf SkTr}$$ defined by ${\sf R}_{ht}({\mathbb H})=({\sf G}_{g}(H_{1}), {\sf G}_{sg}(H_{2}),  \omega_{\sigma_{H}})$ on objects and by ${\sf R}_{ht}(f)={\sf G}_{sg}(f)$ on morphisms is a functor between {\sf HTr} and {\sf SkTr}.
	
\end{remark}

\begin{definition}
Let ${\mathbb F}$ be a field and let  ${\mathbb H}$ be a Hopf truss in ${\sf Vect}_{\mathbb F}$. We will say that ${\mathbb H}$ is pointed cosemisimple if the its subjacent coalgebra $(H,\varepsilon_{H}, \delta_{H})$ is pointed and cosemisimple.
\end{definition}

\begin{theorem}
\label{111}
Let ${\sf P}_{st}$ and ${\sf R}_{ht}$ be the functors defined in the previous remark. Then, ${\sf P}_{st}\dashv {\sf R}_{ht}$ and this adjunction induces an equivalence of categories between  {\sf SkTr} and the full subcategory of {\sf HTr} of all pointed cosemisimple   Hopf trusses.
\end{theorem}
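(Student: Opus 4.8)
The plan is to assemble the adjunction ${\sf P}_{st}\dashv {\sf R}_{ht}$ out of the two component adjunctions ${\sf L}_{g}\dashv {\sf G}_{g}$ and ${\sf L}_{sg}\dashv {\sf G}_{sg}$ recorded in Remark \ref{t-exe}, and then to appeal to the standard fact that any adjunction restricts to an equivalence between the full subcategory of objects on which the unit is invertible and the full subcategory of objects on which the counit is invertible.

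First I would exhibit the unit and counit. For a skew truss ${\mathbb T}=(T_{1},T_{2},\omega_{T})$ the grouplike elements of ${\mathbb F}[T]$ are exactly the elements of $T$, so ${\sf G}({\mathbb F}[T])=T$ with the operations induced by $\diamond_{1}$, $\diamond_{2}$ and $\omega_{T}$; hence ${\sf R}_{ht}\co {\sf P}_{st}({\mathbb T})={\mathbb T}$ and the unit $u_{\mathbb T}$ is the identity, in agreement with the fact (Remark \ref{t-exe}) that the units of the component adjunctions are identities. For a Hopf truss ${\mathbb H}=(H_{1},H_{2},\sigma_{H})$ the two layers share the comonoid $(H,\varepsilon_{H},\delta_{H})$, so ${\sf G}(H_{1})={\sf G}(H_{2})={\sf G}(H)$ as sets, and I define the counit $\iota_{\mathbb H}\colon {\mathbb F}[{\sf G}(H)]\to H$ to be the linear extension of the inclusion ${\sf G}(H)\hookrightarrow H$.

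The crucial point is that this single linear map simultaneously plays two roles: it is the counit of ${\sf L}_{g}\dashv {\sf G}_{g}$ for the Hopf monoid $H_{1}$ and the counit of ${\sf L}_{sg}\dashv {\sf G}_{sg}$ for the non-unital bimonoid $H_{2}$. Consequently $\iota_{\mathbb H}$ is at once a Hopf monoid morphism on the first structure and a non-unital bimonoid morphism on the second; its compatibility with the cocycles, $\sigma_{H}\co \iota_{\mathbb H}=\iota_{\mathbb H}\co \sigma_{{\mathbb F}[{\sf G}(H)]}$, is then automatic from (\ref{cocycle}), by the same computation that yields (\ref{mortruss}). Thus $\iota_{\mathbb H}$ is a morphism of Hopf trusses. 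Naturality of $u$ and $\iota$ and the two triangle identities then reduce, layer by layer, to the corresponding properties of ${\sf L}_{g}\dashv {\sf G}_{g}$ and ${\sf L}_{sg}\dashv {\sf G}_{sg}$, since every datum of a Hopf truss morphism is controlled by its two underlying morphisms; this establishes ${\sf P}_{st}\dashv {\sf R}_{ht}$.

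For the equivalence I would observe that the unit $u_{\mathbb T}$ is invertible for every object of {\sf SkTr} (it is the identity), whereas the counit $\iota_{\mathbb H}$ is invertible precisely when the underlying comonoid satisfies $H={\mathbb F}[{\sf G}(H)]$, that is, exactly when ${\mathbb H}$ is pointed cosemisimple; moreover, whenever $\iota_{\mathbb H}$ is bijective its inverse is again a morphism of Hopf trusses, so the isomorphism lives in {\sf HTr}. Restricting the adjunction to these fixed points then gives the desired equivalence between {\sf SkTr} and the full subcategory of {\sf HTr} of pointed cosemisimple Hopf trusses. The only real obstacle is the bookkeeping in the third paragraph: one must check that the counit, defined once and for all as a single linear map, genuinely qualifies as a morphism for both the Hopf monoid layer and the non-unital bimonoid layer at the same time, and that cocycle compatibility is inherited rather than requiring a separate verification---this is where the coincidence ${\sf G}(H_{1})={\sf G}(H_{2})$ and formula (\ref{cocycle}) do the work.
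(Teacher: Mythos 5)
Your proposal is correct and follows essentially the same route as the paper's proof: both rest on the component adjunctions ${\sf L}_{g}\dashv {\sf G}_{g}$ and ${\sf L}_{sg}\dashv {\sf G}_{sg}$ of Remark \ref{t-exe}, with identity unit and the grouplike-inclusion counit (cocycle compatibility being automatic from (\ref{cocycle})), and both obtain the equivalence from the observation that the counit is invertible exactly on the pointed cosemisimple Hopf trusses. The only difference is presentational: you verify the adjunction via unit, counit and the triangle identities assembled layer by layer, whereas the paper checks the hom-set bijection directly (restriction of morphisms to grouplikes, with linear extension as its inverse) --- equivalent formulations of the same adjunction.
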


\begin{proof} 
Let ${\mathbb T}$ be an object in {\sf SkTr} and let ${\mathbb H}$ be an object in {\sf HTr}. We define a map 
$$\Gamma_{{\mathbb T},{\mathbb H}}:{\sf Hom}_{\sf HTr}({\sf P}_{st}({\mathbb T}), {\mathbb H})\rightarrow {\sf Hom}_{\sf SkTr}({\mathbb T}, {\sf R}_{ht}({\mathbb H}))$$
in the following way: if $q:{\sf P}_{st}({\mathbb T})\rightarrow {\mathbb H}$ is a morphism in ${\sf Hom}_{\sf HTr}({\sf P}_{st}({\mathbb T}), {\mathbb H})$, then $\Gamma_{{\mathbb T},{\mathbb H}}(q):T\rightarrow {\sf G(H)}$ is the map defined by $\Gamma_{{\mathbb T},{\mathbb H}}(q)(a)=q(a)$. Using that $q$ is a Hopf monoid  morphism between ${\sf L}_{g}(T_{1})$ and $H_{1}$ and a non-unital bimonoid morphism between  ${\sf L}_{sg}(T_{2})$ and $H_{2}$, we obtain that $\Gamma_{{\mathbb T},{\mathbb H}}(q)$ is a group morphism between $T_{1}$ and ${\sf G}_{g}(H_{1})$ and a semigroup morphism between $T_{2}$ and ${\sf G}_{sg}(H_{2})$. Therefore, $\Gamma_{{\mathbb T},{\mathbb H}}(q)$ is well defined. It is easy to show that it is natural in both components and injective. Moreover, it is surjective because, if $h:{\mathbb T}\rightarrow {\sf R}_{ht}({\mathbb H})$ is a morphism in {\sf SkTr}, then $h=\Gamma_{{\mathbb T},{\mathbb H}}(l_{h})$,  where $l_{h}$ is the unique morphisms in {\sf HTr} such that $l_{h}(a)=h(a)$ for all $a\in T$. Indeed, it is easy to show that $l_{h}$  is a Hopf monoid morphism between ${\sf L}_{g}(T_{1})$ and $H_{1}$ and a morphism of non-unital bimonoids between ${\sf L}_{sg}(T_{2})$ and $H_{2}$. 
Finally, the identity $h=\Gamma_{{\mathbb T},{\mathbb H}}(l_{h})$ follows trivially from the definition of $l_{h}$.

Therefore  ${\sf P}_{st}\dashv {\sf R}_{ht}$ and the unit of this adjuction  is the identity while the counit is not a natural isomorphism in general.  In any case, if ${\mathbb H}$ is a pointed cosemisimple  Hopf truss, then  the counit is an isomorphism, as the counits for the adjunctions given in Remark \ref{t-exe} are, and thus we have the equivalence.

\end{proof}

\section{Hopf trusses and generalized invertible 1-cocycles}

In \cite{AGV} the authors proved that there exists a closed relation between Hopf braces and 1-cocycles. In  this section we will prove that this connection remains valid for Hopf trusses. First we will introduce the notion of generalized invertible 1-cocycle between a non-unital bimonoid $B$ and a Hopf monoid $H$ in the braided monoidal category ${\sf C}$.

\begin{definition}
\label{1-cocy} 
Let $H=(H,\eta_{H}, \mu_{H},\varepsilon_{H}, \delta_{H}, \lambda_{H})$ be a Hopf monoid in {\sf C} and let  $B=(B, \mu_{B},\varepsilon_{B}, \delta_{B})$ be a non-unital bimonoid in {\sf C}. Assume that $H$ is a non-unital left $B$-module monoid with action $\phi_{H}$. Let $\pi:B\rightarrow H$ be comonoid morphism. We will say that $\pi$ is an generalized invertible 1-cocycle if it is an isomorphism and there exist a comonoid endomorphism $\theta_{\pi}: B\rightarrow B$ such that 
\begin{equation}
\label{1-c}
\pi\circ \mu_{B}=\mu_{H}\circ ((\pi\circ \theta_{\pi})\otimes \phi_{H})\circ (\delta_{B}\otimes \pi)
\end{equation}
holds.	

Let $\pi:B\rightarrow H$ and $\pi^{\prime}:B^{\prime}\rightarrow H^{\prime}$ be generalized invertible 1-cocycles. A morphism between them is a pair $(f,g)$ where $f:B\rightarrow B^{\prime}$ is a morphism of non-unital bimonoids  and $g:H\rightarrow H^{\prime}$ is a morphism of Hopf monoids satisfying the following identities:
\begin{equation}
\label{1-c1}
f\circ \theta_{\pi}=\theta_{\pi^{\prime}}\circ f,
\end{equation}
\begin{equation}
\label{1-c2}
g\circ \pi=\pi^{\prime}\circ f,
\end{equation}
\begin{equation}
\label{1-c3}
g\circ \phi_{H}=\phi_{H^{\prime}}\circ (f\otimes g).
\end{equation}

Then, with these morphisms, generalized invertible 1-cocycles form a category denoted by {\sf GIC}. In the following lines an object in {\sf GIC} will also be denoted by the triple $(\pi: B\rightarrow H, \theta_{\pi})$.

Note that if $(\pi: B\rightarrow H, \theta_{\pi})$ is a generalized invertible 1-cocycle such that $B$ is a Hopf monoid, $(H, \phi_{H})$ is a left $B$-module monoid and $\theta_{\pi}=id_{B}$, then $(\pi: B\rightarrow H, \theta_{\pi})$ is an invertible 1-cocycle in the sense of \cite{AGV}. If we denote the category of invertible 1-cocycles by {\sf IC}, then  it is obvious that it is a subcategory of {\sf GIC}.

\end{definition}

\begin{theorem}
\label{EGIHT}
The categories {\sf GIC} and {\sf HTr} are equivalent.
\end{theorem}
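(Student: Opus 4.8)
The plan is to construct a pair of functors $\mathcal{F}\colon {\sf HTr}\to {\sf GIC}$ and $\mathcal{G}\colon {\sf GIC}\to {\sf HTr}$ and to show they form an equivalence, with $\mathcal{G}\circ\mathcal{F}$ equal to the identity on the nose and $\mathcal{F}\circ\mathcal{G}$ naturally isomorphic to the identity. For $\mathcal{F}$, given a Hopf truss ${\mathbb H}=(H_{1},H_{2},\sigma_{H})$ I would set $\mathcal{F}({\mathbb H})=(id_{H}\colon H_{2}\to H_{1},\sigma_{H})$, taking the non-unital left $H_{2}$-module monoid structure on $H_{1}$ to be $\Gamma_{H_{1}}^{\sigma_{H}}$ (which is an action by \cite[Theorem 6.5]{BRZ1}), the comonoid isomorphism to be $id_{H}$, and the comonoid endomorphism to be $\theta_{id_{H}}=\sigma_{H}$. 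On a morphism $f$ I would set $\mathcal{F}(f)=(f,f)$: then \eqref{1-c1}, \eqref{1-c2} and \eqref{1-c3} follow, respectively, from \eqref{mortruss}, from $f\circ id_{H}=id_{H}\circ f$, and from the fact that $f$ preserves $\mu_{H}^{1}$, $\mu_{H}^{2}$, $\delta_{H}$, $\lambda_{H}$ and $\sigma_{H}$ (the last via \eqref{mortruss} and \eqref{morant}), so that it intertwines $\Gamma_{H_{1}}^{\sigma_{H}}$ with $\Gamma_{B_{1}}^{\sigma_{B}}$. The only substantive point is the cocycle identity \eqref{1-c}, i.e. $\mu_{H}^{2}=\mu_{H}^{1}\circ(\sigma_{H}\otimes\Gamma_{H_{1}}^{\sigma_{H}})\circ(\delta_{H}\otimes H)$; this I would obtain by precomposing condition (iii) of Definition \ref{H-truss} with $H\otimes\eta_{H}\otimes H$, using the unit property of $\mu_{H}^{1}$ on the left and, on the right, the naturality relation $c_{H,H}\circ(H\otimes\eta_{H})=\eta_{H}\otimes H$ (valid since $c_{H,K}=id_{H}$) together with \eqref{cocycle} in the form $\mu_{H}^{2}\circ(H\otimes\eta_{H})=\sigma_{H}$.

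For $\mathcal{G}$, given $(\pi\colon B\to H,\theta_{\pi})$ with action $\phi_{H}$ I would transport all structure along the comonoid isomorphism $\pi$ onto $H$: keep $H_{1}=H$, and define $\mu_{H}^{2}=\pi\circ\mu_{B}\circ(\pi^{-1}\otimes\pi^{-1})$, $\sigma_{H}=\pi\circ\theta_{\pi}\circ\pi^{-1}$ and $\bar{\phi}=\phi_{H}\circ(\pi^{-1}\otimes H)$. Since $\pi$ is a comonoid isomorphism, $(H,\mu_{H}^{2},\varepsilon_{H},\delta_{H})$ is a non-unital bimonoid and $\sigma_{H}$ a comonoid endomorphism, giving (i) and (ii) of Definition \ref{H-truss}, while $(H,\bar{\phi})$ inherits the non-unital left $H_{2}$-module monoid axioms \eqref{bmm1} and \eqref{bmm2}. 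Transporting \eqref{1-c} along $\pi$ yields $\mu_{H}^{2}=\mu_{H}^{1}\circ(\sigma_{H}\otimes\bar{\phi})\circ(\delta_{H}\otimes H)$. A decisive intermediate step is to verify that the action $\Gamma_{H_{1}}^{\sigma_{H}}$ computed from the transported data equals $\bar{\phi}$: substituting this formula into $\Gamma_{H_{1}}^{\sigma_{H}}=\mu_{H}^{1}\circ((\lambda_{H}\circ\sigma_{H})\otimes\mu_{H}^{2})\circ(\delta_{H}\otimes H)$ and using that $\sigma_{H}$ is comonoidal, the factors $\lambda_{H}\circ\sigma_{H}$ and $\sigma_{H}$ collapse through the antipode axiom \eqref{antipode} to $\eta_{H}\circ\varepsilon_{H}$, which the counit absorbs, leaving exactly $\bar{\phi}$. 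I would then prove condition (iii) by starting from $\mu_{H}^{2}\circ(H\otimes\mu_{H}^{1})$, substituting the cocycle formula, applying \eqref{bmm2} to $\bar{\phi}\circ(H\otimes\mu_{H}^{1})$, reassociating $\mu_{H}^{1}$, and recognising the first two factors as $\mu_{H}^{2}$ via the cocycle formula again, so that the right-hand side of (iii) emerges. On morphisms I would send $(f,g)$ to $g$; \eqref{1-c2} together with multiplicativity of $f$ shows $g$ is simultaneously a morphism of Hopf monoids and, via the transported products, of non-unital bimonoids.

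Finally, I would check the composites. By construction $\mathcal{G}\circ\mathcal{F}=id_{\sf HTr}$, since transport along $id_{H}$ is trivial and $\mathcal{G}(f,f)=f$. For $\mathcal{F}\circ\mathcal{G}$ applied to $(\pi\colon B\to H,\theta_{\pi})$ one obtains $(id_{H}\colon H_{2}\to H_{1},\sigma_{H})$ with action $\Gamma_{H_{1}}^{\sigma_{H}}=\bar{\phi}$, and the pair $(\pi,id_{H})$ is an isomorphism in {\sf GIC} from the original object to $\mathcal{F}(\mathcal{G}(\,\cdot\,))$: \eqref{1-c1} and \eqref{1-c2} are immediate from $\sigma_{H}=\pi\circ\theta_{\pi}\circ\pi^{-1}$, while \eqref{1-c3} is precisely $\phi_{H}=\Gamma_{H_{1}}^{\sigma_{H}}\circ(\pi\otimes H)$, established above. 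Naturality in the argument reduces to $\pi'\circ f=g\circ\pi$, which is \eqref{1-c2} for a morphism $(f,g)$, so $\mathcal{F}\circ\mathcal{G}\cong id_{\sf GIC}$ and the equivalence follows.

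I expect the main obstacle to be the proof of condition (iii) inside $\mathcal{G}$: unlike the $\mathcal{F}$-direction, where (iii) is available and one only specialises it, here (iii) must be produced from the weaker data of the cocycle identity \eqref{1-c} and the module monoid axioms, and the computation must keep careful track of the braidings $c_{H,H}$ occurring both in \eqref{bmm2} and in the statement of (iii). The antipode collapse showing $\Gamma_{H_{1}}^{\sigma_{H}}=\bar{\phi}$ is the conceptual heart of the argument, since it is what forces the given action $\phi_{H}$ of a generalized invertible 1-cocycle to agree with the canonical Hopf truss action; it therefore deserves to be carried out in full, with the braiding bookkeeping made explicit.
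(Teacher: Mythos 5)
Your proposal is correct and follows essentially the same route as the paper's own proof: the same pair of functors (the paper's ${\sf E}$ and ${\sf Q}$), the same strict identity ${\sf Q}\circ{\sf E}=id_{\sf HTr}$, the same isomorphism $(\pi,id_{H})$ witnessing ${\sf E}\circ{\sf Q}\cong id_{\sf GIC}$, and the same conceptual core, namely the antipode collapse identifying $\Gamma_{H}^{\sigma_{\pi}}$ with $\phi_{H}\circ(\pi^{-1}\otimes H)$, which the paper carries out inline rather than as a separate lemma. Your only tactical deviations --- deriving (\ref{1-c}) by specializing condition (iii) of Definition \ref{H-truss} at the unit via (\ref{cocycle}) instead of the paper's direct antipode computation, running the verification of (iii) from left to right instead of right to left, and checking naturality of $(\pi,id_{H})$ explicitly via (\ref{1-c2}) --- are all sound and, if anything, slightly more complete than the paper's write-up.
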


\begin{proof}
Let ${\mathbb H}=(H_{1}, H_{2},  \sigma_{H})$ be an object in {\sf HTr}. Then, $(id_{H}:H_{2}\rightarrow H_{1}, \theta_{id_{H}}=\sigma_{H})$ is a generalized  invertible 1-cocycle. Indeed, 
trivially $id_{H}$ is a comonoid isomorphism and $H_{1}$ is a non unital left $H_{2}$-module monoid  with the action $\Gamma_{H_{1}}^{\sigma_{H}}$ defined in (iii) of Definiton (\ref{H-truss}). Finally,  
\begin{itemize}
\item[ ]$\hspace{0.38cm}\mu_H^1\circ ((id_{H}\circ \theta_{id_{H}})\otimes \Gamma_{H_{1}}^{\sigma_{H}})\circ (\delta_{H} \otimes id_{H}) $
\item [ ]$=\mu_H^1\circ (\sigma_{H}\otimes (\mu_H^1\circ ((\lambda_H\circ \sigma_H)\otimes \mu_{H}^2)\circ (\delta_{H}\otimes H)))\circ (\delta_H \otimes H)   $ {\scriptsize (by the definition of $\Gamma_{H_{1}}^{\sigma_{H}}$)}
\item [ ]$=\mu_H^1\circ ((\mu_H^1\circ (\sigma_H\otimes (\lambda_H\circ \sigma_H)\circ \delta_H))\ot \mu_H^2)\circ \delta_H$ {\scriptsize (by the associativity of $\mu_H^1$ and the coassociativity of}
\item[ ]$\hspace{0.38cm}$ {\scriptsize  $\delta_H$)}
\item [ ]$=\mu_H^1\circ ((id_{H}\ast \lambda_H)\ot \mu_H^2)\circ \delta_H$ {\scriptsize  (by the condition of comonoid morphism for $\sigma_H$)}
\item [ ]$= id_{H}\circ \mu_H^2  $ {\scriptsize  (by (\ref{antipode}) and the properties of  $\varepsilon_H$ and $\eta_H$).}
\end{itemize}

On the other hand, let ${\mathbb H}=(H_1, H_{2},  \sigma_{H})$ and ${\mathbb H}^{\prime}=(H_1^{\prime}, H_{2}^{\prime},  \sigma_{H^{\prime}})$ be objects in {\sf HTr} and let $f:{\mathbb H}\rightarrow {\mathbb H}^{\prime}$ be a morphism between them. The pair $(f,f)$ is a morphism in {\sf GIC} between $(id_{H}:H_{2}\rightarrow H_1, \sigma_{H})$ and $(id_{H^{\prime}}:H_{2}^{\prime}\rightarrow H_1^{\prime}, \sigma_{H^{\prime}})$ because $f:H_1\to H_1^{\prime}$ is a Hopf  monoid morphism, $f:H_2\to H_2^{\prime}$ is a non-unital bimonoid morphisn,  (\ref{1-c1}) holds trivially, (\ref{1-c2}) holds by (\ref{mortruss}),  and, finally, (\ref{1-c3}) follows from 
\begin{itemize}
\item[ ]$\hspace{0.38cm}f\circ \Gamma_{H_{1}}^{\sigma_{H}} $
\item [ ]$=\mu_H^1\circ ((\lambda_H\circ f\circ\sigma_H)\otimes (\mu_{H^{\prime}}^2\circ (f\otimes f)))\circ (\delta_{H}\otimes H)$ {\scriptsize (by the condition of multiplicative morphism for $f$}
\item[ ]$\hspace{0.38cm}${\scriptsize  and (\ref{morant}))}
\item [ ]$=\mu_H^1\circ ((\lambda_H\circ \sigma_{H^{\prime}}\circ f)\otimes (\mu_{H^{\prime}}^2\circ (f\otimes f)))\circ (\delta_H \otimes H)   $ {\scriptsize (by (\ref{mortruss}))} 
\item [ ]$=\Gamma_{H_{1}^{\prime}}^{\sigma_{H^{\prime}}}\circ (f\otimes f)$ {\scriptsize  (by  the condition of comonoid morphism for $f$)}.
\end{itemize}

Therefore, there exists a functor $${\sf E}:{\sf HTr}\rightarrow  {\sf GIC}$$ defined on objects by ${\sf E}({\mathbb H})=(id_{H}:H_{2}\rightarrow H_1, \sigma_{H})$ and on morphisms by ${\sf E}(f)=(f,f).$

The next step is to define the functor $${\sf Q}:\;{\sf GIC} \rightarrow  {\sf HTr}.$$  

Let  $(\pi: B\rightarrow H, \theta_{\pi})$ be an object in {\sf GIC}. Define $\mu_{H}^{\pi}:=\pi\circ\mu_{B}\circ (\pi^{-1}\otimes \pi^{-1})$ and $\sigma_{\pi}:=\pi\circ \theta_{\pi}\circ \pi^{-1}$. Then,  ${\mathbb H}_{\pi}=(H, H_{\pi},  \sigma_{\pi})$, where $H_{\pi}=(H, \mu_{H}^{\pi}, \varepsilon_{H}, \delta_{H})$, is an object in {\sf HTr}. Indeed, trivially $H_{\pi}$ is a non-unital bimonoid in {\sf C} because $\pi$ is a comonoid isomorphism and $B$ is a non-unital bimonoid. The morphism $\sigma_{\pi}$ is a comonoid morphism because it is defined as a composition of comonoid morphisms. Finally, 

\begin{itemize}
\item[ ]$\hspace{0.38cm}\mu_{H}\co (\mu_{H}^{\pi}\ot \Gamma_{H}^{\sigma_{\pi}} )\co (H\ot c_{H,H}\ot H)\co (\delta_{H}\ot H\ot H) $
\item [ ]$=\mu_{H}\circ ((\mu_{H}\circ ((\pi\circ \theta_{\pi})\otimes \phi_{H})\circ (\delta_{B}\otimes H))\otimes (\mu_{H}\circ ((((\lambda_{H}\ast id_{H}) \circ \pi\circ  \theta_{\pi})\otimes \phi_{H})  \circ (\delta_{B}\otimes H)))$
\item[ ]$\hspace{0.38cm}\circ (B\otimes c_{B,H}\otimes H) \circ ((\delta_{B}\circ \pi^{-1})\otimes H\otimes H)$ {\scriptsize (by (\ref{1-c}), associativity of $\mu_{H}$, coassociativity of $\delta_{H}$ and naturality}
\item[ ]$\hspace{0.38cm}$ {\scriptsize  of $c$)}
\item [ ]$= \mu_{H}\circ ((\pi\circ \theta_{\pi}) \otimes  (\mu_{H}\circ (\phi_{H}\otimes \phi_{H})\circ (B\otimes c_{B,H}\otimes H)\circ (\delta_{B}\otimes H\otimes H)))\circ  ((\delta_{B}\circ \pi^{-1})\otimes H\otimes H)$  
\item[ ]$\hspace{0.38cm}${\scriptsize (by (\ref{antipode}), the condition of comonoid morphism for $\theta_{\pi}$ and $\pi$, the properties of the unit $\eta_{H}$ and the counit $\varepsilon_{B}$ and}
\item[ ]$\hspace{0.38cm}${\scriptsize the coassociativity of $\delta_{H}$)}
\item [ ]$= \mu_{H}^{\pi}\circ (H\otimes \mu_{H})  $ {\scriptsize  (by the condition of non-unital left $B$-module monoid for $H$ and (\ref{1-c}))}.
\end{itemize}

Also, if $(f,g)$ is a morphism in {\sf GIC} between $(\pi: B\rightarrow H, \theta_{\pi})$ and $(\pi^{\prime}: B^{\prime}\rightarrow H^{\prime}, \theta_{\pi^{\prime}})$, then $g$ is a morphism in {\sf HTr} between ${\mathbb H}_{\pi}$ and ${\mathbb H}_{\pi^{\prime}}^{\prime}$ because, $g:H\to H^{\prime}$ is a morphism of Hopf monoids and $g:H_{\pi}\to H_{\pi^{\prime}}^{\prime}$ is a morphism in {\sf bimod} because
\begin{itemize}
\item[ ]$\hspace{0.38cm}g\circ \mu_{H}^{\pi} $
\item [ ]$=g\circ \pi\circ \mu_{B}\circ (\pi^{-1}\otimes \pi^{-1}) $ {\scriptsize (by definition of $\mu_{H}^{\pi}$)}
\item [ ]$= \pi^{\prime}\circ f\circ \mu_{B}\circ (\pi^{-1}\otimes \pi^{-1})  $ {\scriptsize (by (\ref{1-c2}))}
\item [ ]$= \pi^{\prime}\circ \mu_{B^{\prime}}\circ ((f\circ\pi^{-1})\otimes (f\circ\pi^{-1}))$ {\scriptsize  (by the condition of  multiplicative morphism for $f$)}
\item [ ]$= \pi^{\prime}\circ \mu_{B^{\prime}}\circ ((\pi^{\prime -1}\circ g)\otimes (\pi^{\prime -1}\circ g)) $ {\scriptsize (by (\ref{1-c2}))}
\item [ ]$=\mu_{H^{\prime}}^{\pi^{\prime}}\circ (g\otimes g)$ {\scriptsize (by definition of $\mu_{H^{\prime}}^{\pi^{\prime}}$).}
\end{itemize}
 
As a consequence of these facts, we have a functor ${\sf Q}:\;{\sf IC}\rightarrow  {\sf HTr}$ defined by ${\sf Q}((\pi: B\rightarrow H, \theta_{\pi}))={\mathbb H}_{\pi}$  on objects and by 
${\sf Q}((f,g))=g$ on morphisms. 

These functors induce an equivalence between the two categories because, clearly, ${\sf QE}=id_{{\sf HTr}}$. On the other hand,  we have that  
$$(\pi, id_{H}): (\pi: B\rightarrow H, \theta_{\pi}) \rightarrow (id_{H}:H_{\pi}\rightarrow H, \sigma_{\pi})={\sf EQ}((\pi: B\rightarrow H, \theta_{\pi}))$$ is an isomorphism in {\sf GIC}  because $\pi:B\rightarrow H_{\pi}$ is a isomorphism between $B$ and $H_{\pi}$ in {\sf bimod}, $\pi\circ \theta_{\pi}=\sigma_{\pi}\circ \pi$, (\ref{1-c2}) holds trivially, and

\begin{itemize}
	\item[ ]$\hspace{0.38cm}\Gamma_{H}^{\sigma_{\pi}}\circ (\pi \otimes id_{H}) $
	\item [ ]$= \mu_{H}\circ ((\lambda_{H}\circ \pi\circ \theta_{\pi}\circ \pi^{-1})\otimes (\pi\circ \mu_{B}\circ (\pi^{-1}\otimes \pi^{-1})))\circ ((\delta_{H}\circ \pi)\otimes H)$ {\scriptsize (by definition of $\sigma_{\pi}$ and $\mu_H^{\pi}$)}
	\item [ ]$= \mu_H\circ (((\lambda_H\ast id_{H})\circ \pi\circ \theta_{\pi})\otimes \phi_{H})\circ (\delta_{B}\otimes H)$ {\scriptsize (by (\ref{1-c}), the condition of comonoid morphism for $\theta_{\pi}$ and}
	\item[ ]$\hspace{0.38cm}$ {\scriptsize  $\pi$)}
	\item [ ]$=\phi_{H} $ {\scriptsize (by (\ref{antipode}), the condition of comonoid morphism for $\theta_{\pi}$ and $\pi$ and the properties of the unit $\eta_{H}$ and the counit}
	\item[ ]$\hspace{0.38cm}$ {\scriptsize  $\varepsilon_{B}$)}.
\end{itemize}

Therefore, ${\sf EQ}\backsimeq {\sf Id}_{{\sf GIC}}$. 

\end{proof}

\begin{example}
For example, thanks to Theorem \ref{EGIHT} we can assert the following: In the conditions  of Example \ref{EXHT1},  $(id_{D}:D_{q}\rightarrow D, q)$ is a generalized invertible 1-cocycle. Also,  using  Example \ref{EXHT2}, we obtain that  $(id_{H}:H_{2}^q\rightarrow H_1, \sigma_{H}\circ q)$ is a generalized invertible 1-cocycle. Finally,  Example  
\ref{EXHT3} implies that  $(id_{D}:D_{\Upsilon}\rightarrow D, \Upsilon\ast (\lambda_{D}\circ \phi\circ \Upsilon))$ is a generalized invertible 1-cocycle for a cocommutative Hopf monoid $D$.
\end{example}

\begin{example}\label{EXHT1-I1C}
	Let $H$ be a Hopf monoid and $B$ a non unital bimonoid. Suppose that $\pi:B\to H$ is an isomorphism of comonoids and $\theta_{\pi}:B\to B$ is a morphism of comonoids satisfying 
	\begin{equation}\label{EXHT1-I1C-cond1}
		\pi\circ\mu_B = \mu_H\circ ((\pi\circ\theta_{\pi})\ot \pi)
	\end{equation}
	and 
	\begin{equation}\label{EXHT1-I1C-cond2}
		\mu_B\circ (B\ot \theta_{\pi}) = \theta_{\pi}\circ \mu_B.
	\end{equation}
	
These data define a generalized invertible 1-cocycle considering the trivial action of $B$ on $H$ $\varphi_H= \varepsilon_B\ot H$. Moreover, if we apply the functor ${\sf Q}$ obtained in the proof of Theorem \ref{EGIHT}, then the corresponding Hopf truss is of the type defined in Example \ref{EXHT1} for $q= \pi\circ \theta_{\pi}\circ \pi^{-1}$, i.e., ${\mathbb H}_{\pi}={\mathbb H}_{q}.$
\end{example}

\begin{example}\label{EXHT2-I1C}
Suppose that $B$ and $H$ are Hopf monoids and let $\pi:B\to H$ be a generalized invertible 1-cocycle. If $p:B\to B$ is a comonoid morphism satisfying equality (\ref{condBidemp}) we obtain a non unital bimonoid $B_{p} = (B, \mu_{B_{p}}=  \mu_B\circ (p\ot B), \varepsilon_B,\delta_B, )$, as mentioned in Example \ref{EXHT1}. We also obtain a new generalized invertible 1-cocycle $\pi_{p}:B_{p}\to H$ for the non unital $B_{p}$-module structure of $H$ given by $\phi_H^{p}= \phi_H\circ (p\ot H):B\ot H\to H$. The isomorphism of comonoids is given by $\pi_{p} = \pi$ and the corresponding comonoid morphism is $\theta_{\pi_{p}} = \theta_{\pi}\circ p:B_{p}\to B_{p}$ . 
	
The Hopf truss obtained following Theorem \ref{EGIHT} is of the type described in Example \ref{EXHT2} for $q = \pi\circ p\circ \pi^{-1}$, i.e., ${\mathbb H}_{\pi_{p}}={\mathbb H}_{\pi}^{q}.$
\end{example}

\begin{remark}
As is easy to see, the previous theorem is a generalization of  \cite[Theorem 1.12]{AGV} because if we particularize this equivalence to the Hopf brace setting, then we obtain the quoted result.
\end{remark}

\section{Modules for Hopf trusses and generalized invertible 1-cocycles}

In this section we introduce the notion of module over a Hopf truss and over a generalized invertible 1-cocycle. Also we will prove that, thanks to the categorical equivalence of Theorem \ref{EGIHT}, there exist some interesting categorical connections between these categories of modules.

\begin{definition}
\label{l-mod}
Let ${\Bbb H}$ be a Hopf truss. A left ${\Bbb H}$-module is a triple $(M,\psi_{M}^{1}, \psi_{M}^{2})$, where $(M,\psi_{M}^{1})$ is a left $H_{1}$-module, $(M, \psi_{M}^{2})$  is a non-unital left $H_{2}$-module and the following identity 
\begin{equation}
\label{mod-l1}
\psi_{M}^{2}\co (H\ot \psi_{M}^{1})=\psi_{M}^1\co (\mu_{H}^{2}\ot \Gamma_{M}^{\sigma_{H}})\co (H\ot c_{H,H}\ot M)\co (\delta_{H}\ot H\ot M)
\end{equation}
holds, where 
$$\Gamma_{M}^{\sigma_{H}}=\psi_{M}^{1}\co ((\lambda_{H}\circ \sigma_{H})\ot \psi_{M}^{2})\co (\delta_{H}\ot M).$$
		
Given two left ${\Bbb H}$-modules  $(M,\psi_{M}^1, \psi_{M}^{2})$  and  $(N,\psi_{N}^1, \psi_{N}^{2})$, a morphism $f:M\rightarrow N$  is called a morphism of left ${\Bbb H}$-modules if  $f$ is a morphism of left $H_{1}$-modules and left non-unital $H_{2}$-modules. Left ${\Bbb H}$-modules  with morphisms of left ${\Bbb H}$-modules  form a category which we denote by $\;_{\Bbb H}${\sf Mod}. 
\end{definition}

\begin{example}
\label{ex-mtr}
Let ${\Bbb H}$ be a Hopf truss. The triple $(H, \psi_{H}^1=\mu_{H}^{1}, \psi_{H}^2=\mu_{H}^{2})$ is an example of  left ${\Bbb H}$-module. Also, if $K$ is the unit object of ${\sf C}$, then $(K, \psi_{K}^1=\varepsilon_{H}, \psi_{K}^2=\varepsilon_{H})$ is a left ${\Bbb H}$-module called the trivial module. 
	
If $X$ is an object in {\sf C}, then the triple 
$${\mathbb H}\otimes X=(H\otimes X, \psi_{H\otimes X}^1=\mu_{H}^{1}\otimes X, \psi_{H\otimes X}^2=\mu_{H}^{2}\otimes X)$$ is an example of left ${\Bbb H}$-module. Also, if $f:X\rightarrow X^{\prime}$ is a morphism in {\sf C}, then $H\otimes f$ is a morphism in $\;_{\Bbb H}${\sf Mod} between ${\mathbb H}\otimes X$ and  ${\mathbb H}\otimes X^{\prime}$. Therefore, there exist a functor, called the induction functor, 
$${\mathbb H}\otimes -: {\sf C}\rightarrow \;_{\Bbb H}{\sf Mod}$$
defined on objects by ${\mathbb H}\otimes -(X)={\mathbb H}\otimes X$ and by 
${\mathbb H}\otimes -(f)={\mathbb H}\otimes f$ on morphisms.
\end{example}

\begin{example}\label{EXHT2-Mod}
Let $D$ be a Hopf monoid and $q:D\to D$ a comonoid morphism satisfying equality (\ref{condBidemp}), and let ${\mathbb D}_q$ be the Hopf truss defined in Example \ref{EXHT1}. If $(M, \varphi_M)$ is a left $D$-module, then $(M, \varphi_M, \varphi_M^{q} = \varphi_M\circ (q\ot M))$ is a left ${\mathbb D}_q$-module with $\Gamma_M^{q} = \varepsilon_D\ot M$. In particular, if $\pi:D\to H$ is an invertible 1-cocycle, then $H$ becomes a left ${\mathbb D}_q$-module. 
\end{example}

\begin{remark}
\label{mis-mod} If the a Hopf truss ${\mathbb H}$ is a Hopf brace and in Definition \ref{l-mod} we assume that a $(M, \psi_{M}^{2})$ is a left $H_2$-module, then we obtain the definition of module over a Hopf brace introduced in \cite{RGON}.

\end{remark}

\begin{remark}
\label{yo} 
Using the naturality of $c$ and the coassociativity of $\delta_{H}$, it is easy to show that  (\ref{mod-l1}) is equivalent to 
\begin{equation}
\label{mod-l1p}
\psi_{M}^{2}\co (H\ot \psi_{M}^{1})=\psi_{M}^1\co (\Lambda_{H_1}^{\sigma_{H}}\ot \psi_{M}^{2})\co (H\ot c_{H,H}\ot M)\co (\delta_{H}\ot H\ot M)
\end{equation}
where 
$$\Lambda_{H_1}^{\sigma_{H}}=\mu_{H}^{1}\circ (\mu_{H}^2\otimes (\lambda_{H}\circ \sigma_{H}))\circ (H\otimes c_{H,H})\circ (\delta_{H}\otimes H).$$

Also, the following equality
\begin{equation}
\label{GMH1}
\Gamma_{M}^{\sigma_{H}}\circ (H\otimes \psi_{M}^1)=\psi_{M}^1\circ (\Gamma_{H_{1}}^{\sigma_{H}}\otimes \Gamma_{M}^{\sigma_{H}})\circ (H\otimes c_{H,H}\otimes M)\circ (\delta_{H}\otimes H\otimes M).
\end{equation}
 holds. Indeed: 
\begin{itemize}
\item[ ]$\hspace{0.38cm} \psi_{M}^1\circ (\Gamma_{H_{1}}^{\sigma_{H}}\otimes \Gamma_{M}^{\sigma_{H}})\circ (H\otimes c_{H,H}\otimes M)\circ (\delta_{H}\otimes H\otimes M) $
\item [ ]$=\psi_{M}^1\circ ((\lambda_{H}\circ \sigma_{H})\otimes (\psi_{M}^1\co (\Lambda_{H_1}^{\sigma_{H}}\ot \psi_{M}^{2})\co (H\ot c_{H,H}\ot M)\co (\delta_{H}\ot H\ot M)))\circ (\delta_{H}\otimes H\otimes M) $ 
\item[ ]$\hspace{0.38cm}$ {\scriptsize (by naturality of $c$, coassociativity of $\delta_{H}$, the condition of left $H_{1}$-module for $(M,\psi_{M}^1)$ and the associativity of}
\item[ ]$\hspace{0.38cm}$ {\scriptsize  $\mu_{H}^{1}$)}
\item [ ]$=\Gamma_{M}^{\sigma_{H}}\circ (H\otimes \psi_{M}^1) $ {\scriptsize (by (\ref{mod-l1p}))}.
\end{itemize}	
\end{remark}

\begin{definition}
\label{def-pi-module}
Let $(\pi:B\to H, \theta_{\pi})$ be a generalized invertible 1-cocycle. A left module over $(\pi:B\to H, \theta_{\pi})$ is a 6-tuple \((M,N, \phi_M, \varphi_M,  \phi_N, \gamma)\) where
\begin{enumerate}
\item[(i)] $\phi_M:B\otimes M\rightarrow M$ is a morphism in {\sf C}.
\item[(ii)] $(M, \varphi_M)$ is a  left $H$-module.
\item[(iii)] $(N, \phi_N)$ is a non-unitary left $B$-module.
\item[(iv)] The equality 
\begin{equation}
\label{p-v}
\phi_M\circ (B\ot \varphi_M) = \varphi_M\circ (\phi_H\ot \phi_M)\circ (B\ot c_{A,H}\ot M)\circ (\delta_B\ot H\ot M).
\end{equation}
holds.
\item[(v)] $\gamma:N\to M$ is an isomorphism in $\sf C$ such that 
\begin{equation}\label{eq-gamma}
\gamma\circ \phi_N = \varphi_M\circ ((\pi \circ \theta_{\pi})\ot \phi_M)\circ (\delta_B\ot \gamma).
\end{equation}
\end{enumerate} 
\end{definition}

\begin{definition}
\label{def-pi-mor}	
Let \((M,N, \phi_M, \varphi_M,  \phi_N, \gamma)\) and 
\((M^{\prime},N^{\prime}, \phi_{M^{\prime}}, \varphi_{M^{\prime}},  \phi_{N^{\prime}}, \gamma^{\prime})\)
be left modules over a generalized invertible 1-cocycle $(\pi:B\to H, \theta_{\pi})$. A morphism between them is a pair $(h,l)$ of morphisms in {\sf C} such that:
\begin{itemize}
\item[(i)] The morphism $h:M\rightarrow M^{\prime}$ satisfies $f\circ \phi_M=\phi_{M^{\prime}}\circ (B\otimes f)$ and is left $H$-linear.
\item[(ii)] The morphism $l:N\rightarrow N^{\prime}$ is  left $B$-linear.
\item[(iii)] The following identity holds:
\begin{equation}\label{fg-g}
h\circ \gamma = \gamma^{\prime}\circ l.
\end{equation}
\end{itemize}

Note that, by (\ref{fg-g}), the morphism $l$ is determined by $h$ because $l=(\gamma^{\prime})^{-1}\circ h\circ \gamma.$
	
With the obvious composition of morphisms, left modules over a generalized  invertible 1-cocycle $(\pi:B\to H, \theta_{\pi})$ with action $\phi_{H}$ form a category that we will denote by $\;_{(\pi, \phi_{H}, \theta_{\pi})}${\sf Mod}.
\end{definition}

\begin{remark}
If  \((M,N, \phi_M, \varphi_M,  \phi_N, \gamma)\) is a left module over the generalized invertible 1-cocycle  $(\pi: B\rightarrow H, \theta_{\pi})$, by (\ref{eq-gamma}),  we obtain that $\phi_{N}$ is determined by $\phi_{M}$ and $\varphi_{M}$ in the following way:
\begin{equation}
\label{req-g1}
\phi_N =\gamma^{-1}\circ  \varphi_M\circ ((\pi \circ \theta_{\pi})\ot \phi_M)\circ (\delta_B\ot \gamma).
\end{equation}
	
Also, composing in both sides of the equality  (\ref{eq-gamma}) with $(((\lambda_H\circ \pi)\ot B)\circ \delta_B)\ot \gamma^{-1}$ on the right and with $\varphi_M$ on the left we obtain the identity
\begin{equation}
\label{req-g2}
\phi_M =\varphi_{M}\circ ((\lambda_{H}\circ \pi\circ \theta_{\pi})\otimes (\gamma\circ \phi_{N}))\circ (\delta_{B}\otimes \gamma^{-1})
\end{equation}
because:
\begin{itemize}
\item[ ]$\hspace{0.38cm}\varphi_{M}\circ ((\lambda_{H}\circ \pi\circ \theta_{\pi})\otimes (\gamma\circ \phi_{N}) ) \otimes (\delta_{B}\otimes \gamma^{-1})$
\item [ ]$= \varphi_{M}\circ ((\lambda_{H}\circ \pi\circ \theta_{\pi})\otimes (\varphi_M\circ ((\pi \circ \theta_{\pi})\ot \phi_M)\circ (\delta_B\ot \gamma)) \otimes (\delta_{B}\otimes \gamma^{-1}) $ {\scriptsize (by (\ref{eq-gamma}))}
\item [ ]$= \varphi_{M}\circ(((\lambda_{H}\ast id_{H})\circ \pi\circ \theta_{\pi}) \otimes \phi_{M})\otimes (\delta_{B}\otimes M)$ {\scriptsize (by  the condition of comonoid morphism for $\theta_{\pi}$ and $\pi$, the coassociativity of  $\delta_{B}$)}
\item [ ]$= \phi_{M}$ {\scriptsize (by (\ref{antipode}) and the unit counit properties))}.
\end{itemize}
\end{remark}

\begin{example}
\label{exmod1}
It is easy to see that, if $(\pi: B\rightarrow H, \theta_{\pi})$ is a generalized invertible 1-cocycle, then the 6-tuple \((H,B, \phi_H, \mu_H,  \mu_B, \pi)\) is an example of left module over $(\pi: B\rightarrow H, \theta_{\pi})$.
\end{example}

\begin{theorem}
\label{modfg}
Assume that  $(f,g)$ is a morphism  between the generalized invertible 1-cocycles $(\pi: B\rightarrow H, \theta_{\pi})$ and $(\pi: B^{\prime}\rightarrow H^{\prime}, \theta_{\pi^{\prime}})$. Then, there exists a functor 
$${\sf M}_{(f,g)}:\;_{(\pi^{\prime}, \phi_{H^{\prime}}, \theta_{\pi^{\prime}})}{\sf Mod}\;\rightarrow \;_{(\pi, \phi_{H}, \theta_{\pi})}{\sf Mod}$$
defined on objects by 
$${\sf M}_{(f,g)}((P,Q, \phi_P, \varphi_P,  \phi_Q, \tau))=(P,Q, \phi_P^{\pi}=\phi_P\circ (f\otimes P), \varphi_P^{\pi}=\varphi_P\circ (g\otimes P),  \phi_Q^{\pi}=\phi_Q\circ (f\otimes Q), \tau)$$
and on morphisms by the identity.
\end{theorem}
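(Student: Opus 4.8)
The plan is to verify that the functor $\sf M_{(f,g)}$ is well-defined on objects (i.e., the proposed $6$-tuple really is a left module over $(\pi:B\to H,\theta_\pi)$), that it sends morphisms to morphisms, and that it respects identities and composition. Since the action on morphisms is the identity, functoriality (preservation of composition and identities) will be immediate once I check that a morphism $(h,l)$ in the target category remains a morphism after the relabelling of actions; the substantive content is entirely the object-level verification of the five axioms of Definition \ref{def-pi-module}.

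\textbf{Object-level verification.} Fix an object $(P,Q,\phi_P,\varphi_P,\phi_Q,\tau)$ in $\;_{(\pi',\phi_{H'},\theta_{\pi'})}{\sf Mod}$. I would check the five conditions of Definition \ref{def-pi-module} for the twisted data. Condition (i), that $\phi_P^\pi=\phi_P\circ(f\otimes P):B\otimes P\to P$ is a morphism, is trivial. For condition (ii), I must show $(P,\varphi_P^\pi)$ with $\varphi_P^\pi=\varphi_P\circ(g\otimes P)$ is a left $H$-module: the unit property follows from $g\circ\eta_H=\eta_{H'}$ together with the unit property of $\varphi_P$, and associativity follows by inserting $g$ multiplicative, $\mu_{H'}\circ(g\otimes g)=g\circ\mu_H$, into the associativity of $\varphi_P$. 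Condition (iii), that $(Q,\phi_Q^\pi)$ with $\phi_Q^\pi=\phi_Q\circ(f\otimes Q)$ is a non-unital left $B$-module, is analogous using that $f$ is multiplicative.

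\textbf{The two genuinely computational axioms.} The crux is conditions (iv) and (v). For (iv) I would substitute the definitions $\phi_P^\pi,\varphi_P^\pi$ into the left-hand side of (\ref{p-v}) and push the copies of $f$ and $g$ through using compatibility (\ref{1-c3}), $g\circ\phi_H=\phi_{H'}\circ(f\otimes g)$, the comultiplicativity of $f$ (so $\delta_{B'}\circ f=(f\otimes f)\circ\delta_B$), and naturality of the braiding $c$; the displayed identity (\ref{p-v}) for $(P,\varphi_P,\phi_P)$ over $(\pi',\theta_{\pi'})$ then yields the corresponding identity over $(\pi,\theta_\pi)$. For (v) I keep the same isomorphism $\tau:Q\to P$ and must verify $\tau\circ\phi_Q^\pi=\varphi_P^\pi\circ((\pi\circ\theta_\pi)\otimes\phi_P^\pi)\circ(\delta_B\otimes\tau)$. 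Starting from the known relation (\ref{eq-gamma}) for $\tau$ over $(\pi',\theta_{\pi'})$, I rewrite its right-hand side using $\pi'\circ\theta_{\pi'}\circ f=\pi'\circ f\circ\theta_\pi=g\circ\pi\circ\theta_\pi$ (combining (\ref{1-c1}) and (\ref{1-c2})) and again $g\circ\phi_H=\phi_{H'}\circ(f\otimes g)$, absorbing the $f$'s and $g$'s into the twisted actions; the comultiplicativity of $f$ aligns the $\delta_B$. \textbf{The main obstacle I expect} is exactly this step (v): correctly tracking how the single factor $\pi\circ\theta_\pi$ and the action $\varphi_P$ each acquire their twisting morphism, since $\pi\circ\theta_\pi$ is twisted by $g$ on the $H$-side but fed by $f\circ\theta_\pi$ on the $B$-side, so the identities (\ref{1-c1}) and (\ref{1-c2}) must be applied in the right order before the $H$-module compatibility can be invoked.

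\textbf{Morphisms and functoriality.} Finally, given a morphism $(h,l)$ in the target category between objects $(P,Q,\dots)$ and $(P',Q',\dots)$, I check it is still a morphism after twisting. That $h$ is $B$-linear for $\phi_P^\pi$ follows by precomposing the given relation $h\circ\phi_P=\phi_{P'}\circ(B\otimes h)$ with $f\otimes P$; that $h$ is $H$-linear for $\varphi_P^\pi$ follows by precomposing the $H$-linearity of $h$ with $g\otimes P$; that $l$ is $B$-linear for $\phi_Q^\pi$ is analogous; and (\ref{fg-g}), $h\circ\tau=\tau'\circ l$, is unchanged since $\tau$ and $\tau'$ are kept. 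As the object $6$-tuples are only relabelled in their actions (the underlying objects $P,Q$ and the morphisms of the pair are untouched) and morphisms are sent to themselves, preservation of identities and of composition is immediate, so $\sf M_{(f,g)}$ is a functor.
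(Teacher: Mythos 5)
Your proposal is correct and takes essentially the same route as the paper's proof: verify conditions (i)--(iii) trivially from $f$, $g$ being (non-unital) bimonoid/Hopf monoid morphisms, reduce the twisted (\ref{p-v}) to the original one via (\ref{1-c3}), naturality of $c$ and comultiplicativity of $f$, reduce the twisted (\ref{eq-gamma}) via the chain $g\circ\pi\circ\theta_{\pi}=\pi^{\prime}\circ f\circ\theta_{\pi}=\pi^{\prime}\circ\theta_{\pi^{\prime}}\circ f$ from (\ref{1-c2}) and (\ref{1-c1}), and observe that morphisms carry over by precomposition. (Your only superfluous element is invoking (\ref{1-c3}) in step (v), where it is not needed since only the $B$-actions and $\pi\circ\theta_{\pi}$ appear there.)
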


\begin{proof} The existence of the functor ${\sf M}_{(f,g)}$ is a consequence of the following facts: Trivially $(P, \varphi_P^{\pi})$ is a left $H$-module and  $(Q, \phi_Q^{\pi})$ is a non-unitary left $B$-modules and . Also,  
\begin{itemize}
\item[ ]$\hspace{0.38cm}\varphi_P^{\pi}\circ (\phi_H\ot \phi_P^{\pi})\circ (B\ot c_{B,H}\ot P)\circ (\delta_B\ot H\ot P) $
\item [ ]$=\varphi_P\circ ((g\circ \phi_{H})\otimes (\phi_{P}\circ (f\otimes P)))\circ (B\otimes c_{B,H}\otimes P)\circ (\delta_{B}\otimes H\otimes P )$ {\scriptsize (by definition of $\phi_P^{\pi}$ and $\varphi_P^{\pi}$)}
\item [ ]$=\varphi_P\circ ( \phi_{H^{\prime}}\otimes \phi_{P})\circ (B^{\prime}\otimes c_{B^{\prime},H^{\prime}}\otimes P)\circ (((f\otimes f)\circ \delta_{B})\otimes g\otimes P ) $ {\scriptsize (by (\ref{1-c3}) and naturality of $c$)}
\item [ ]$=\varphi_P\circ ( \phi_{H^{\prime}}\otimes \phi_{P})\circ (B^{\prime}\otimes c_{B^{\prime},H^{\prime}}\otimes P)\circ ((\delta_{B^{\prime}}\circ f)\otimes g\otimes P )$ {\scriptsize(by the comonoid morphism condition for}
\item[ ]$\hspace{0.38cm}$ {\scriptsize $f$)}
\item [ ]$=\phi_P^{\pi}\circ (B\ot \varphi_P^{\pi}) ${\scriptsize (by (\ref{p-v}))},
\end{itemize}
and
\begin{itemize}
\item[ ]$\hspace{0.38cm}\varphi_P^{\pi}\circ ((\pi\circ \theta_{\pi})\ot \phi_P^{\pi})\circ (\delta_B\ot \tau) $
\item [ ]$= \varphi_P\circ ((g\circ \pi\circ \theta_{\pi})\otimes (\phi_{P}\circ (f\otimes P)))\circ (\delta_{B}\otimes \tau)$ {\scriptsize (by definition of $\phi_P^{\pi}$ and $\varphi_P^{\pi}$)}
\item [ ]$=\varphi_P\circ ((\pi^{\prime}\circ \theta_{\pi^{\prime}}\circ f)\otimes (\phi_{P}\circ (f\otimes P)))\circ (\delta_{B}\otimes \tau)$ {\scriptsize (by (\ref{1-c1}) and (\ref{1-c2}))}
\item [ ]$= \varphi_P\circ ((\pi^{\prime}\circ \theta_{\pi^{\prime}})\otimes \phi_{P})\circ ((\delta_{B^{\prime}}\circ f)\otimes \tau)$ {\scriptsize (by the comonoid morphism condition for $f$)}
\item [ ]$=\tau\circ \phi_Q^{\pi}$ {\scriptsize (by (\ref{eq-gamma}))}
\end{itemize}

Then $(P,Q, \phi_P^{\pi}, \varphi_P^{\pi}, \phi_Q^{\pi}, \tau)$ is an object in $\;_{(\pi, \phi_{H}, \theta_{\pi})}{\sf Mod}$. Finally, it is obvious that, if $(h,l)$ is a morphism in $\;_{(\pi^{\prime}, \phi_{H^{\prime}}, \theta_{\pi^{\prime}})}{\sf Mod}$, then $(h,l)$ is a morphism in $\;_{(\pi, \phi_{H}, \theta_{\pi})}{\sf Mod}$. 
\end{proof}

\begin{remark}
\label{cd2}
If $(f,g)$ is an isomorphism defined between the generalized invertible 1-cocycles $(\pi: B\rightarrow H, \theta_{\pi})$ and $(\pi: B^{\prime}\rightarrow H^{\prime}, \theta_{\pi^{\prime}})$ with inverse $(f^{-1}, g^{-1})$, then the functor ${\sf M}_{(f,g)}$ is an isomorphism of categories with inverse ${\sf M}_{(f^{-1},g^{-1})}$.  For example, in the proof of Theorem \ref{EGIHT} we proved that, for all generalized invertible 1-cocycle $(\pi: B\rightarrow H, \theta_{\pi})$ , $(\pi, id_{H})$ is an isomorphism between the generalized invertible 1-cocycles $(\pi: B\rightarrow H, \theta_{\pi})$ , $(\pi, id_{H})$ and $(id_{H}:H_{\pi}\rightarrow H, \sigma_{\pi})$ where $\sigma_{\pi}=\pi\circ \theta_{\pi}\circ \pi^{-1}$ and the action is $\Gamma_{H}^{\sigma_{\pi}}$, i.e. the action is $\phi_{H}\circ (\pi^{-1}\otimes H)$ where $\phi_{H}$ is the action associated to $(\pi: B\rightarrow H, \theta_{\pi})$. Therefore, the functor 
$${\sf M}_{(\pi,id_{H})}:\;_{(id_{H}, \Gamma_{H}^{\sigma_{\pi}}, \sigma_{\pi})}{\sf Mod}\;\rightarrow \;_{(\pi, \phi_{H}, \theta_{\pi})}{\sf Mod}$$
is an isomorphism of categories with inverse 
$${\sf M}_{(\pi^{-1},id_{H})}:\;_{(\pi, \phi_{H}, \theta_{\pi})}{\sf Mod} \;\rightarrow \;_{(id_{H}, \Gamma_{H}^{\sigma_{\pi}}, \sigma_{\pi})}{\sf Mod}.$$
\end{remark}

\begin{theorem}
\label{th-mon-cat21}
Let ${\mathbb H}$ be a Hopf truss and let ${\sf E}({\mathbb H})$ be the invertible 1-cocycle induced by the  functor {\sf E} introduced in the proof of Theorem \ref{EGIHT}. There  exists a functor 
$${\sf G}_{{\mathbb H}}:\;_{{\mathbb H}}{\sf Mod}\;\rightarrow \;_{(id_{H}, \Gamma_{H_1}^{\sigma_{H}}, \sigma_{H})}{\sf Mod}$$ 
defined on objects by 
$${\sf G}_{{\mathbb H}}((M, \psi_{M}^1, \psi_{M}^2))=
(M,M, \widehat{\phi}_{M}=\Gamma_{M}^{\sigma_{H}}, \widehat{\varphi}_{M}=\psi_{M}^1,
\overline{\phi}_{M}=\psi_{M}^2, id_{M})$$
and on morphisms by ${\sf G}_{{\mathbb H}}(f)=(f,f)$.
\end{theorem}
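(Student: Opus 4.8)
The plan is to verify directly that the stated assignment lands in the target category and respects morphisms, the bulk of the work having already been done in Remark \ref{yo} and in the proof of Theorem \ref{EGIHT}. Recall that, by the construction of ${\sf E}$, the invertible 1-cocycle ${\sf E}({\mathbb H})$ is $(id_{H}:H_{2}\rightarrow H_{1}, \sigma_{H})$ with action $\phi_{H}=\Gamma_{H_{1}}^{\sigma_{H}}$; hence in Definition \ref{def-pi-module} we are in the situation $B=H_{2}$, $H=H_{1}$, $\pi=id_{H}$, $\theta_{\pi}=\sigma_{H}$ and $\delta_{B}=\delta_{H}$. So for a left ${\mathbb H}$-module $(M,\psi_{M}^{1},\psi_{M}^{2})$ I must show that the 6-tuple $(M,M,\Gamma_{M}^{\sigma_{H}},\psi_{M}^{1},\psi_{M}^{2},id_{M})$ fulfils conditions (i)--(v) of that definition.

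Conditions (i)--(iii) are immediate: $\Gamma_{M}^{\sigma_{H}}$ is by definition a composite of morphisms of ${\sf C}$, while $(M,\psi_{M}^{1})$ being a left $H_{1}$-module and $(M,\psi_{M}^{2})$ a non-unital left $H_{2}$-module are precisely part of the datum of a left ${\mathbb H}$-module. The key observation is that condition (iv), which is equation (\ref{p-v}) specialised to $\phi_{M}=\Gamma_{M}^{\sigma_{H}}$, $\varphi_{M}=\psi_{M}^{1}$ and $\phi_{H}=\Gamma_{H_{1}}^{\sigma_{H}}$, is verbatim the identity (\ref{GMH1}) established in Remark \ref{yo}; thus it holds with no further computation.

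The only genuine calculation is condition (v). Since $\gamma=id_{M}$ is trivially an isomorphism, equation (\ref{eq-gamma}) reduces to
$$\psi_{M}^{2}=\psi_{M}^{1}\circ (\sigma_{H}\otimes \Gamma_{M}^{\sigma_{H}})\circ (\delta_{H}\otimes M).$$
I would prove this by the same antipode collapse used in the first part of the proof of Theorem \ref{EGIHT}: expand $\Gamma_{M}^{\sigma_{H}}$, use associativity of the $\psi_{M}^{1}$-action together with coassociativity of $\delta_{H}$ to isolate the factor $\mu_{H}^{1}\circ(\sigma_{H}\otimes(\lambda_{H}\circ \sigma_{H}))\circ \delta_{H}$, rewrite it via the comonoid morphism property of $\sigma_{H}$ as $(id_{H}\ast \lambda_{H})\circ \sigma_{H}$, which equals $\eta_{H}\circ \varepsilon_{H}\circ \sigma_{H}=\eta_{H}\circ \varepsilon_{H}$ by (\ref{antipode}) and $\varepsilon_{H}\circ \sigma_{H}=\varepsilon_{H}$, and finally collapse using the unit property of $\psi_{M}^{1}$ and the counit property of $\delta_{H}$ to recover $\psi_{M}^{2}$. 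This is exactly the substitution $\mu_{H}^{1}\rightsquigarrow\psi_{M}^{1}$, $\mu_{H}^{2}\rightsquigarrow\psi_{M}^{2}$, $\Gamma_{H_{1}}^{\sigma_{H}}\rightsquigarrow\Gamma_{M}^{\sigma_{H}}$ of the earlier argument, so it requires no new idea; this is the main, and essentially the only, obstacle, and it is mild.

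For morphisms, given a morphism $f$ of left ${\mathbb H}$-modules I would check that $(f,f)$ satisfies Definition \ref{def-pi-mor}. Left $H_{1}$-linearity and non-unital left $H_{2}$-linearity of $f$ are given by hypothesis; that $f$ intertwines the actions $\Gamma_{(-)}^{\sigma_{H}}$ follows by inserting the definition of $\Gamma_{M}^{\sigma_{H}}$ and pushing $f$ successively through $\psi_{M}^{2}$ and $\psi_{M}^{1}$ using those two linearities; and condition (\ref{fg-g}) is $f\circ id_{M}=id_{N}\circ f$, which is trivial. Since ${\sf G}_{{\mathbb H}}$ acts on morphisms by $f\mapsto (f,f)$, it visibly preserves identities and composition, and is therefore a functor.
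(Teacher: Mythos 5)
Your proposal is correct and follows essentially the same route as the paper: conditions (i)--(iii) read off from the definition of left ${\mathbb H}$-module, condition (iv) identified verbatim with (\ref{GMH1}) from Remark \ref{yo}, condition (v) established by the antipode-collapse computation (which is exactly the calculation the paper writes out, reducing $\mu_{H}^{1}\circ(\sigma_{H}\otimes(\lambda_{H}\circ\sigma_{H}))\circ\delta_{H}$ to $(id_{H}\ast\lambda_{H})\circ\sigma_{H}=\eta_{H}\circ\varepsilon_{H}$), and the morphism check done by pushing $f$ through the two actions. The only difference is presentational: you invoke the computation from the proof of Theorem \ref{EGIHT} by substitution, whereas the paper writes the analogous steps explicitly for $\psi_{M}^{1},\psi_{M}^{2}$.
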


\begin{proof}
By  assumption, $(M, \widehat{\varphi}_{M}=\psi_{M}^1)$ is a left $H_{1}$-module and $(M, \overline{\phi}_{M}=\psi_{M}^2)$ is a non-unital left $H_{2}$-module. On the other hand, by (\ref{GMH1}) we have that 
$$\widehat{\phi}_{M}\circ (H\otimes \widehat{\varphi}_{M})=\widehat{\varphi}_{M}\circ (\Gamma_{H_1}\otimes \widehat{\phi}_{M})\circ (H\otimes c_{H,H}\otimes M)\circ (\delta_{H}\otimes H\otimes M)$$
and then, (\ref{p-v}) holds. Also, 
\begin{itemize}
\item[ ]$\hspace{0.38cm} \widehat{\varphi}_{M}\circ (\sigma_{H}\ot \widehat{\phi}_{M})\circ (\delta_H\ot M)$
\item [ ]$=\psi_{M}^1\circ (\sigma_{H}\ot \Gamma_{M}^{\sigma_{H}})\circ (\delta_H\ot M) $ {\scriptsize (by definition of $ \widehat{\varphi}_{M}$ and $\widehat{\phi}_{M}$)}
\item [ ]$=\psi_{M}^1\circ (\sigma_{H}\ot (\psi_{M}^{1}\co ((\lambda_{H}\circ \sigma_{H})\ot \psi_{M}^{2})\co (\delta_{H}\ot M)))\circ (\delta_{H}\otimes M) $ {\scriptsize (by definition of $\Gamma_{M}^{\sigma_{H}}$)}
\item [ ]$=\psi_{M}^1\circ (((id_{H}\ast \lambda_{H}^{1})\circ \sigma_{H})\otimes \psi_{M}^2)\circ (\delta_{H}\otimes M) $ {\scriptsize (by the condition of left $H_{1}$-module of $(M, \psi_{M}^1)$, the }
\item[ ]$\hspace{0.38cm}$ {\scriptsize  coassociativity of $\delta_{H}$ and the condition of comonoid morphism for $\sigma_{H}$)}
\item [ ]$=\overline{\phi}_{M}$ {\scriptsize (by (\ref{antipode}), the counit properties, the condition of left $H_{1}$-module of $(M, \psi_{M}^1)$ and the definition of $ \overline{\phi}_{M}$).}
\end{itemize}
	
Finally, it is easy to show that, if $f$ is a morphism in $\;_{{\mathbb H}}{\sf Mod}$ between the objects $(M, \psi_{M}^1, \psi_{M}^2)$ and $(M^{\prime}, \psi_{M^{\prime}}^1, \psi_{M^{\prime}}^2)$, then the pair $(f,f)$ is a morphism in $\;_{(id_{H}, \Gamma_{H_1}^{\sigma_{H}}, \sigma_{H})}{\sf Mod}$ between ${\sf G}_{{\mathbb H}}((M, \psi_{M}^1, \psi_{M}^2))$ and ${\sf G}_{{\mathbb H}}((M^{\prime}, \psi_{M^{\prime}}^1, \psi_{M^{\prime}}^2))$.
\end{proof}

\begin{theorem}
\label{pHpi}
Let $(\pi: B\rightarrow H, \theta_{\pi})$ be a generalized invertible 1-cocycle. Then the categories $_{(\pi, \phi_{H}, \theta_{\pi})}{\sf Mod}$ and $\;_{{\Bbb H}_{\pi}}${\sf Mod} are equivalent.
\end{theorem}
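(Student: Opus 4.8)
The plan is to realize the desired equivalence as a composite through the intermediate category $\;_{(id_{H}, \Gamma_{H}^{\sigma_{\pi}}, \sigma_{\pi})}{\sf Mod}$ attached to the invertible $1$-cocycle ${\sf E}({\mathbb H}_{\pi})=(id_{H}\colon H_{\pi}\to H, \sigma_{\pi})$. On one side, Remark \ref{cd2} already supplies an \emph{isomorphism} of categories ${\sf M}_{(\pi, id_{H})}\colon\;_{(id_{H}, \Gamma_{H}^{\sigma_{\pi}}, \sigma_{\pi})}{\sf Mod}\to\;_{(\pi, \phi_{H}, \theta_{\pi})}{\sf Mod}$. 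On the other side, Theorem \ref{th-mon-cat21} applied to the Hopf truss ${\mathbb H}_{\pi}$ (whose Hopf monoid is $H$ and whose cocycle is $\sigma_{\pi}$, so that $\Gamma_{H_{1}}^{\sigma_{H_{\pi}}}=\Gamma_{H}^{\sigma_{\pi}}$) furnishes a functor ${\sf G}_{{\mathbb H}_{\pi}}\colon\;_{{\mathbb H}_{\pi}}{\sf Mod}\to\;_{(id_{H}, \Gamma_{H}^{\sigma_{\pi}}, \sigma_{\pi})}{\sf Mod}$. Since ${\sf M}_{(\pi, id_{H})}$ is an isomorphism of categories, it suffices to prove that ${\sf G}_{{\mathbb H}}$ is an equivalence for an \emph{arbitrary} Hopf truss ${\mathbb H}$ and then specialize to ${\mathbb H}={\mathbb H}_{\pi}$; the composite ${\sf M}_{(\pi, id_{H})}\circ {\sf G}_{{\mathbb H}_{\pi}}$ will then be the required equivalence.

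To show that ${\sf G}_{{\mathbb H}}$ is an equivalence I would exhibit an explicit quasi-inverse. Given an object $(M,N, \phi_{M}, \varphi_{M}, \phi_{N}, \gamma)$ of $\;_{(id_{H}, \Gamma_{H_{1}}^{\sigma_{H}}, \sigma_{H})}{\sf Mod}$, set ${\sf F}_{{\mathbb H}}(M,N, \phi_{M}, \varphi_{M}, \phi_{N}, \gamma)=(M, \psi_{M}^{1}=\varphi_{M}, \psi_{M}^{2}=\gamma\circ \phi_{N}\circ (H\otimes \gamma^{-1}))$, and ${\sf F}_{{\mathbb H}}(h,l)=h$ on morphisms. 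By construction $(M,\psi_{M}^{1})$ is a left $H_{1}$-module and, transporting $\phi_{N}$ along the isomorphism $\gamma$, $(M,\psi_{M}^{2})$ is a non-unital left $H_{2}$-module. Two identities will drive the argument. First, (\ref{req-g2}) forces $\phi_{M}=\Gamma_{M}^{\sigma_{H}}$ (computed with respect to $\psi_{M}^{1}$ and $\psi_{M}^{2}$), so that condition (\ref{p-v}) for the module over the cocycle becomes exactly (\ref{GMH1}). Second, cancelling $\gamma$ in (\ref{eq-gamma}) rewrites it as
$$\psi_{M}^{2}=\psi_{M}^{1}\circ (\sigma_{H}\otimes \Gamma_{M}^{\sigma_{H}})\circ (\delta_{H}\otimes M),$$
which is the module analogue of the Hopf-truss identity
$$\mu_{H}^{2}=\mu_{H}^{1}\circ (\sigma_{H}\otimes \Gamma_{H_{1}}^{\sigma_{H}})\circ (\delta_{H}\otimes H)$$
that follows from (\ref{antipode}), the counit property and the comonoid-morphism condition on $\sigma_{H}$.

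The only non-formal point, and the step I expect to be the main obstacle, is verifying that the transported pair satisfies the compatibility (\ref{mod-l1}), since the definition of a module over the cocycle contains (\ref{p-v}) but no condition that visibly matches (\ref{mod-l1}). I would establish it by reducing both sides of (\ref{mod-l1}) to a common expression: on the left-hand side I substitute the rewritten form of (\ref{eq-gamma}) and then apply (\ref{GMH1}) to the inner occurrence of $\Gamma_{M}^{\sigma_{H}}\circ (H\otimes \psi_{M}^{1})$; on the right-hand side I substitute the Hopf-truss identity for $\mu_{H}^{2}$. After using associativity of $\mu_{H}^{1}$ (through the $H_{1}$-module axiom for $\psi_{M}^{1}$), coassociativity of $\delta_{H}$ and naturality of $c$, both sides collapse to
$$\psi_{M}^{1}\circ \big((\mu_{H}^{1}\circ (\sigma_{H}\otimes \Gamma_{H_{1}}^{\sigma_{H}}))\otimes \Gamma_{M}^{\sigma_{H}}\big)\circ (H\otimes H\otimes c_{H,H}\otimes M)\circ \big(((\delta_{H}\otimes H)\circ \delta_{H})\otimes H\otimes M\big),$$
so they agree. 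This makes ${\sf F}_{{\mathbb H}}$ well defined on objects, and functoriality on morphisms is routine from the $H_{1}$-linearity of $h$ together with $h\circ \gamma=\gamma^{\prime}\circ l$, which transport to $H_{2}$-linearity.

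Finally I would check the two composites. Directly from the definitions, ${\sf F}_{{\mathbb H}}\circ {\sf G}_{{\mathbb H}}=\mathrm{Id}_{\;_{{\mathbb H}}{\sf Mod}}$, because ${\sf G}_{{\mathbb H}}$ produces $N=M$ and $\gamma=id_{M}$, so no transport occurs. For the other composite, the pair $(id_{M},\gamma)$ gives, for each object, an isomorphism in $\;_{(id_{H}, \Gamma_{H_{1}}^{\sigma_{H}}, \sigma_{H})}{\sf Mod}$ from $(M,N, \phi_{M}, \varphi_{M}, \phi_{N}, \gamma)$ to ${\sf G}_{{\mathbb H}}{\sf F}_{{\mathbb H}}(M,N, \phi_{M}, \varphi_{M}, \phi_{N}, \gamma)$; its morphism axioms are exactly $\phi_{M}=\Gamma_{M}^{\sigma_{H}}$ (already established) and the $H_{\pi}$-linearity of $\gamma$ built into the definition of $\psi_{M}^{2}$, while naturality is immediate. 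Hence ${\sf G}_{{\mathbb H}}\circ {\sf F}_{{\mathbb H}}\cong \mathrm{Id}$ and ${\sf G}_{{\mathbb H}}$ is an equivalence. Specializing to ${\mathbb H}={\mathbb H}_{\pi}$ and composing with the isomorphism ${\sf M}_{(\pi, id_{H})}$ of Remark \ref{cd2} yields the stated equivalence between $\;_{(\pi, \phi_{H}, \theta_{\pi})}{\sf Mod}$ and $\;_{{\mathbb H}_{\pi}}{\sf Mod}$.
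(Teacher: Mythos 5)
Your proposal is correct and is essentially the paper's own proof, organized only slightly differently: the paper's quasi-inverse functor ${\sf H}_{{\sf tr}}^{\pi}$, defined by $\overline{\psi}_{M}^{2}=\gamma\circ \phi_{N}\circ (\pi^{-1}\otimes \gamma^{-1})$, coincides with your composite ${\sf F}_{{\mathbb H}_{\pi}}\circ {\sf M}_{(\pi,id_{H})}^{-1}$, i.e.\ you absorb the $\pi^{-1}$-twist into conjugation by the isomorphism of Remark \ref{cd2} rather than into the functor itself. In substance the two arguments agree: both factor through $\;_{(id_{H},\Gamma_{H}^{\sigma_{\pi}},\sigma_{\pi})}{\sf Mod}$ using Theorem \ref{th-mon-cat21} and Remark \ref{cd2}, both verify (\ref{mod-l1}) for the transported action via (\ref{req-g2}) (which forces $\phi_{M}=\Gamma_{M}^{\sigma_{\pi}}$) together with the cocycle identity, and both conclude with the natural isomorphism implemented by $(id_{M},\gamma)$.
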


\begin{proof} First of all we will prove that there  exists a functor 
$${\sf H}_{{\sf tr}}^{\pi}:\;_{(\pi, \phi_{H},\theta_{\pi})}{\sf Mod}\;\rightarrow \;_{{\mathbb H}_{\pi}}{\sf Mod}$$ 
defined on objects by 
$${\sf H}_{{\sf tr}}^{\pi}((M,N, \phi_M, \varphi_M,  \phi_N, \gamma))=
(M,\overline{\psi}_{M}^{1}=\varphi_M, \overline{\psi}_{M}^{2}=\gamma\circ \phi_N\circ (\pi^{-1}\otimes \gamma^{-1}))$$
and on morphisms by ${\sf H}_{{\sf tr}}^{\pi}((h,l))=h$. Indeed: By assumption, $(M,\overline{\psi}_{M}^{1}=\varphi_M)$ is a left $H$-module and, using the condition of non-unital left $B$-module of $N$, we obtain that $$(M, \overline{\psi}_{M}^{2}=\gamma\circ \phi_N\circ (\pi^{-1}\otimes \gamma^{-1}))$$ is a non-unital left $H_{\pi}$-module. Also, by (\ref{req-g2}), we have that the identity 
\begin{equation}
\label{pHpi1}
\Gamma_{M}^{\sigma_{\pi}}=\phi_{M}\circ (\pi^{-1}\otimes M)
\end{equation}
holds, where $\Gamma_{M}^{\sigma_{\pi}}=\overline{\psi}_{M}^{1}\circ ((\lambda_{H}\circ \sigma_{\pi})\otimes \overline{\psi}_{M}^{2})\circ (\delta_{H}\otimes M)$. Then, (\ref{mod-l1}) holds because:
\begin{itemize}
\item[ ]$\hspace{0.38cm} \overline{\psi}_{M}^{2}\circ (H\otimes \overline{\psi}_{M}^{1})$
\item [ ]$= \varphi_{M}\circ ((\pi\circ \theta_{\pi})\otimes\phi_{M})\circ ((\delta_{B}\circ \pi^{-1})\otimes \varphi_{M})$ {\scriptsize (by (\ref{req-g1}))}
\item [ ]$= \varphi_{M}\circ ((\pi\circ \theta_{\pi})\otimes (\varphi_{M}\circ (\phi_{H}\otimes \phi_{M})\circ (B\otimes c_{B,H}\otimes M)\circ (\delta_{B}\otimes H\otimes M)))\circ ((\delta_{B}\circ \pi^{-1})\otimes H\otimes M) $ 
\item[ ]$\hspace{0.38cm}${\scriptsize (by (\ref{p-v}))}
\item [ ]$=\varphi_{M}\circ  ((\mu_{H}\circ ((\pi\circ \theta_{\pi})\otimes \phi_{H})\circ (\delta_{B}\otimes \pi))\otimes \phi_{M})\circ (B\otimes c_{B,B}\otimes M)\circ ((\delta_{B}\circ \pi^{-1})\otimes \pi^{-1}\otimes M) $  
\item[ ]$\hspace{0.38cm}${\scriptsize (by the condition of left $H$-module for $M$, the coassociativity of $\delta_{B}$, the naturality of $c$ and the condition of}
\item[ ]$\hspace{0.38cm}${\scriptsize  isomorphism for $\pi$)}
\item [ ]$=\varphi_{M}\circ  ((\pi\circ \mu_{B})\otimes \phi_{M})\circ (B\otimes c_{B,B}\otimes M)\circ ((\delta_{B}\circ \pi^{-1})\otimes \pi^{-1}\otimes M) $ {\scriptsize (by (\ref{1-c}))}
\item [ ]$=\varphi_{M}\circ  (\mu_{H_{\pi}}\otimes (\phi_{M}\circ (\pi^{-1}\otimes M)))\circ (H\otimes c_{H,H}\otimes M)\circ (\delta_{H}\otimes H\otimes M) $ {\scriptsize (by the condition of coalgebra}
\item[ ]$\hspace{0.38cm}$  {\scriptsize  isomorphism for $\pi$ and the naturality of $c$)}
\item [ ]$=\overline{\psi}_{M}^{1}\circ (\mu_{H_{\pi}}\otimes \Gamma_{M}^{\sigma_{\pi}})\circ (H\otimes c_{H,H}\otimes M)\circ (\delta_{H}\otimes H\otimes M)$ {\scriptsize (by (\ref{pHpi1}))}
\end{itemize}
	
On the other hand, if $(h,l)$ is a morphisms in $\;_{(\pi, \phi_{H}, \theta_{\pi})}{\sf Mod}$ between \((M,N, \phi_M, \varphi_M,  \phi_N, \gamma)\) and 
\((M^{\prime},N^{\prime}, \phi_{M^{\prime}}, \varphi_{M^{\prime}},  \phi_{N^{\prime}}, \gamma^{\prime})\), then  we have that $h$ is a morphism in $\;_{{\mathbb H}_{\pi}}{\sf Mod}$  between $(M,\overline{\psi}_{M}^{1}, \overline{\psi}_{M}^{2})$ and $(M^{\prime},\overline{\psi}_{M^{\prime}}^{1}, \overline{\psi}_{M^{\prime}}^{2})$ because, using that $h$ is a morphism of  left $H$-modules, we have $h\circ \overline{\psi}_{M}^{1}=\overline{\psi}_{M^{\prime}}^{1}\circ (H\otimes h)$ and, by (\ref{fg-g}) and the condition of morphism of non-unital left $B$-modules for $l$, we have that $h\circ\overline{\psi}_{M}^{2}=\overline{\psi}_{M^{\prime}}^{2}\circ (H\otimes h)$.
	
Taking into account the functors ${\sf H}_{{\sf tr}}^{\pi}$, ${\sf G}_{{\mathbb H}_{\pi}}$ and ${\sf M}_{(\pi,id_{H})}$, it is easy to show that $${\sf H}_{{\sf br}}^{\pi}\circ ({\sf M}_{(\pi,id_{H})}\circ {\sf G}_{{\mathbb H}_{\pi}})	={\sf id}_{\;_{{\Bbb H}_{\pi}}{\sf Mod}}$$
and
$$(({\sf M}_{(\pi,id_{H})}\circ {\sf G}_{{\mathbb H}_{\pi}})\circ {\sf H}_{{\sf br}}^{\pi})((M,N, \phi_M, \varphi_M,  \phi_N, \gamma))=
(M,M,\phi_{M}, \varphi_{M},\overline{\phi}_{M}^{\pi}=\gamma\circ \phi_{N}\circ (A\otimes \gamma^{-1}), id_{M})$$
holds. Then, 
$$({\sf M}_{(\pi,id_{H})}\circ {\sf G}_{{\mathbb H}_{\pi}})\circ {\sf H}_{{\sf br}}^{\pi}\backsimeq {\sf id}_{_{(\pi, \phi_{H}, \theta_{\pi})}{\sf Mod}}$$
because $(id_{M}, \gamma)$ is an isomorphism in $_{(\pi, \phi_{H},\theta_{\pi})}{\sf Mod}$ between the objects $(M,N, \phi_M, \varphi_M,  \phi_N, \gamma)$ and $(M,M,\phi_{M}, \varphi_{M},\overline{\phi}_{M}^{\pi}, id_{M}).$
	
\end{proof}

\begin{remark}
When we particularize the previous results to Hopf braces we obtain the categorical equivalences obtained in \cite{VRBAMod}. More concretely, \cite[Theorem 2.26]{VRBAMod} is the Hopf brace version of Theorem \ref{pHpi1}.
\end{remark}

\section{The fundamental theorem of Hopf modules for Hopf trusses}

In this section we will assume that ${\sf C}$ admits equalizers. As a consequence every idempotent morphism in  $ {\sf C}$ splits, i.e., if $q:M\rightarrow M$ is a morphism in ${\sf C}$ such that $q=q\co q$, then there exists an object $I(q)$, called the image of $q$,  and morphisms $i:I(q)\rightarrow M$ and $p:M\rightarrow I(q)$ such that $q=i\co p$ and $p\co i =id_{I(q)}$. The morphisms $p$ and $i$ will be called a factorization of $q$. Note that $I(q)$, $p$ and $i$ are unique up to isomorphism.

Under the previous condition,  we will introduce the category ${\mathbb H}$-{\sf Hopf} of left  Hopf modules over a Hopf truss ${\mathbb H}$ and we will to obtain the Fundamental Theorem of Hopf modules for Hopf trusses that is a generalization of the one proved in \cite{RGON} for Hopf  brace.

\begin{definition}
Let  $D$ be a comonoid in {\sf C}. The pair
$(M,\rho_{M})$ is a left $D$-comodule if $M$ is an object in {\sf C} and $\rho_{M}:M\rightarrow D\ot M$ is a morphism
in {\sf C} satisfying $(\varepsilon\ot M)\co \rho_{M}=id_{M}$, $(D\ot \rho_{M})\co \rho_{M}=(\delta\ot M)\co \rho_{M}$. Given two left ${D}$-comodules $(M,\rho_{M})$
and $(N,\rho_{N})$, a morphism $f:M\rightarrow N$ in {\sf C} is a morphism of left  $D$-comodules if $(D\ot f)\co \rho_{M}=\rho_{N}\co f$.  Left $D$-comodules  with morphisms of left $D$-comodules  form a category which we denote by $\;^{{\sf D}}${\sf Comod}.
\end{definition}

\begin{definition}
\label{coinv}
{\rm Let $D$ be a comonoid such that there exits a comonoid morphism  $e:K\rightarrow D$. Let $(M,\rho_{M})$ be a left $D$-comodule. We define the subobject of coinvariants of $M$, denoted by $M^{coD}_{e}$, as the equalizer object of $\rho_{M}$ and $e\ot M$. Then, we have an equalizer diagram 
$$
\setlength{\unitlength}{1mm}
\begin{picture}(101.00,12.00)
\put(20,7.5){\vector(1,0){20}}
\put(46,8.5){\vector(1,0){27}}
\put(46,5.5){\vector(1,0){27}}
\put(13,7){\makebox(0,0){$M^{coD}_{e}$}}
\put(43,7){\makebox(0,0){$M$}}
\put(80,7){\makebox(0,0){$D\otimes M$}}
\put(27.5,10){\makebox(0,0){$j_{M}^e$}}
\put(60,11){\makebox(0,0){$\rho_{M}$}}
\put(60,3){\makebox(0,0){$e\ot M$}}
\end{picture}
$$
where $j_{M}^e$ denotes the equalizer (inclusion) morphism.

If $H$ is a Hopf monoid, then the unit $\eta_{H}$ is a comonoid morphism. Then, let $(M,\rho_{M})$ be a left $D$-comodule, we will denote the equalizer object of $\rho_{M}$ and $\eta_{H}\ot M$ by $M^{coD}$ and the equalizer  morphism by $j_{M}$.

}
\end{definition}

\begin{definition}
Let $B$ be a non-unital bimonoid. A non-unital left $B$-Hopf module over $B$ is a triple $(M,\varphi_{M}, \rho_{M})$ where $(M,\varphi_{M})$ is a non-unital left $B$-module, $(M, \rho_{M})$ is a left $B$-comodule and 
\begin{equation}
\label{HMOD}
 \rho_{M}\co \varphi_{M}=(\mu_{B}\ot \varphi_{M})\co (B\ot c_{B,B}\ot M)\co (\delta_B\ot \rho_{M})
\end{equation} 
holds. Non-unital left $B$-Hopf modules over $B$ with left linear and colinear morphisms form a category which we  denote by ${\sf B}$-{\sf Hopf-mod}.  
	
The definition for left $H$-Hopf modules over a Hopf monoid $H$ is similar changing non-unital left $H$-modules by  left $H$-modules. Then, in this case we will denote the category of $H$-Hopf modules over $H$ by ${\sf H}$-{\sf Hopf-Mod}
	
\end{definition}

\begin{remark}\label{rmk1}
Let $H$ be a Hopf monoid. It is easy to show that, if $(M,\varphi_{M}, \rho_{M})$ is a left $H$-Hopf module over $H$, then the endomorphism $q_{M}:M\rightarrow M$, defined by $$q_{M}=\varphi_{M}\co (\lambda_{H}\ot M)\co \rho_{M}$$ is idempotent and satisfies $\rho_{M}\co q_{M}=\eta_{H}\ot q_{M}$. Therefore, there exists a unique  morphism $$t_{M}:M\rightarrow M^{coH}$$ such that $t_{M}\co j_{M} =q_{M}.$ Let $I(q_{M})$ be the image of the idempotent morphism $q_{M}$ and let  $i_{M}:I(q_{M})\rightarrow M$ and $p_{M}:M\rightarrow I(q_{M})$ be the morphisms such that that $q_{M}=i_{M}\co p_{M}$ and $p_{M}\co i_{M} =id_{I(q_{M})}$. The morphism $$\omega_{M}=t_{M}\co i_{M}:I(q_{M})\rightarrow M^{coH}$$ is an isomorphism with inverse $\omega_{M}^{-1}=p_{M}\co j_{M}$. Moreover, $t_{M}\co \varphi_{M}=\varepsilon_{H}\ot t_{M}$ and, as a consequence, $(M^{co H}, t_{M})$ is the coequalizer of $\varphi_{M}$ and $\varepsilon_{H}\ot M$.

On the other hand, the object $H\ot M^{co H}$ is a left $H$-Hopf module with action $\varphi_{H\ot M^{co H}}=\mu_{H}\ot M^{co H}$ and coaction  $\rho_{H\ot M^{co H}}=\delta_{H}\ot M^{co H}$. The Fundamental Theorem of Hopf modules asserts that $H\ot M^{co H}$ is isomorphic to $M$ in the category ${\sf H}$-{\sf Hopf-Mod}. The isomorphism is defined by  $$\theta_{M}=\varphi_{M}\co (H\ot j_{M}): H\ot M^{co H}\rightarrow M$$ where $\theta_{M}^{-1}=(H\ot t_{M})\co \rho_{M}$.  In the same way as in the case of $M^{coH}$, if $X$ is an object in {\sf C}, then the tensor product $H\ot X$, with the action and coaction induced by the product and the coproduct of $H$, is a left $H$-Hopf module. Then,  there exists a functor ${\sf F}=H\ot -:{\sf C}\;\rightarrow   {\sf H}$-{\sf Hopf-Mod} called the induction functor. Also, for all $M\in {\sf H}$-{\sf Hopf-Mod}, the construction of  $M^{co H}$ is functorial. Thus, there exists a new functor ${\sf G}=(\;\;)^{co H}: {\sf H}$-{\sf Hopf-Mod}$\;\rightarrow {\sf C}$, called the functor  of coinvariants, such that ${\sf F}\dashv {\sf G}$. Moreover, ${\sf F}$ and ${\sf G}$ induce an equivalence between the categories ${\sf H}$-{\sf Hopf-Mod} and  ${\sf C}$.

Let us see now the definition of left Hopf module over a Hopf truss ${\mathbb H}$. 

\end{remark}

\begin{definition}
\label{deHmod}
Let ${\mathbb H}=(H_{1}, H_{2},  \sigma_{H})$ be a Hopf truss. A  left Hopf module over ${\mathbb H}$ (left ${\mathbb H}$-Hopf module) is a $4$-tuple $(M,\psi_{M}^1, \psi_{M}^2, \rho_{M})$ such that:
\begin{itemize}
\item[(i)] The triple $(M,\psi_{M}^1, \psi_{M}^2)$ is a left ${\mathbb H}$-module.
\item[(ii)] The triple $(M,\psi_{M}^1,\rho_{M})$ is a left $H_{1}$-Hopf module.
\item[(iii)] The triple $(M,\psi_{M}^2,\rho_{M})$ is a non-unital left $H_{2}$-Hopf module.
\item[(iv)] If $j_{M}$ is the equalizer morphism of $\rho_{M}$ and $\eta_{H}\ot M$, then the identity 
$$\psi_{M}^1\co (\sigma_{H}\ot j_{M})=\psi_{M}^2\co (H\ot j_{M})$$ 
holds.
\end{itemize}
		
A morphism of left Hopf modules over ${\mathbb H}$ is a morphism of left ${\mathbb H}$-modules and left $H$-comodules. 
		
Left Hopf modules over ${\mathbb H}$ with morphisms of left Hopf  modules  form a category which we denote by ${\mathbb H}$-{\sf Hopf-Mod}.  Note that this definition is a generalization to the Hopf truss setting of the notion of Hopf module over a Hopf brace introduced in \cite{RGON}.
\end{definition}

\begin{example}
\label{ej1}
Let $X$ be an object in {\sf C} and let ${\mathbb H}=(H_{1}, H_{2},  \sigma_{H})$ be a Hopf truss. Then, the $4$-tuple $$(H\ot X,\psi_{H\ot X}^1=\mu_{H}^1\otimes X, \psi_{H\ot X}^2=\mu_{H}^2\otimes X, \rho_{H\ot X}=\delta_{H}\otimes X)$$ is a left ${\mathbb H}$-Hopf  module. Indeed, by Example \ref{ex-mtr}, the triple $(H\ot X,\psi_{H\ot X}^1, \psi_{H\ot X}^2)$ is a left ${\mathbb H}$-module. Moreover, $(H\ot X, \psi_{H\ot X}^1, \rho_{H\ot X})$ is a left $H_{1}$-Hopf module and $(H\ot X, \psi_{H\ot X}^2, \rho_{H\ot X})$ is a non-unital left $H_{2}$-Hopf module. Finally, 
$$
\setlength{\unitlength}{1mm}
\begin{picture}(101.00,12.00)
\put(12,7.5){\vector(1,0){20}}
\put(46,8.5){\vector(1,0){27}}
\put(46,5.5){\vector(1,0){27}}
\put(9,7){\makebox(0,0){$X$}}
\put(39,7){\makebox(0,0){$H\ot X$}}
\put(85,7){\makebox(0,0){$H\otimes H\ot X$}}
\put(23.5,10){\makebox(0,0){$\eta_{H}\ot X$}}
\put(60,11){\makebox(0,0){$\delta_H\ot X$}}
\put(60,3){\makebox(0,0){$\eta_{H}\ot H\ot X$}}
\end{picture}
$$
is an equalizer diagram and this implies that $(H\ot X)^{co H_1}=X$ and $j_{H\ot X}=\eta_{H}\ot X$. Then, (iv) of Definition \ref{deHmod} holds, because by (\ref{cocycle}), we have that $\varphi_{H\ot X}\co (\sigma_{H}\ot j_{H\ot X})=\sigma_{H}\ot X=\psi_{H\ot X}\co (H\ot j_{H\ot X}).$ Note that, for $X=K$, we obtain that $$(H,\mu_{H}^1, \mu_{H}^2, \delta_H)$$ is an object in ${\mathbb H}$-{\sf Hopf-Mod} where $H^{coH_1}=K$.
		
On the other hand, $q_{H\ot X}=\varepsilon_H\ot \eta_{H}\ot X$ and, as a consequence, $I(q_{H\ot X})=X$, $p_{H\ot X}=\varepsilon_H\ot X$ and $i_{H\ot X}=\eta_{H}\ot X.$
		
Finally, it is obvious that, if $f:X\rightarrow Y$ is a morphism in {\sf C}, then $H\ot f$ is a morphism in ${\mathbb H}$-{\sf Hopf-Mod}.
\end{example}

\begin{example}\label{EXHT2-HM}
Recall from Example \ref{EXHT2-Mod} that given a Hopf monoid $D$, a comonoid morphism $q:D\to D$ satisfying (\ref{condBidemp}) and a left $D$-module $(M, \varphi_M)$ we obtain a left ${\mathbb D}_q$-module $(M, \varphi_M, \varphi_M^q)$. If, moreover, $(M, \varphi_M, \rho_M)$ is a left $D$-Hopf module, then it is easey to check that $(M, \varphi_M, \varphi_M^q, \rho_M)$ is a left ${\mathbb D}_q$-Hopf module. 

\end{example}

As a consequence Example \ref{ej1} we have the following results:

\begin{theorem}
\label{indF}
Let ${\mathbb H}$ be a Hopf truss. There exists a functor ${\sf F}=H\ot -:{\sf C}\;\rightarrow {\mathbb H}$-{\sf Hopf}, called the induction functor,  defined on objects by  ${\sf F}(X)=(H\ot X,\psi_{H\ot X}^1, \psi_{H\ot X}^2, \rho_{H\ot X})$ and on morphisms by ${\sf F}(f)=H\ot f.$
\end{theorem}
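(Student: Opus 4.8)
The plan is to verify that the assignment $\mathsf{F}$ meets the three requirements of a functor: it must send objects of ${\sf C}$ to objects of ${\mathbb H}$-{\sf Hopf-Mod}, morphisms to morphisms, and respect identities and composition. Most of the substantive work has already been carried out in Example \ref{ej1}, so the proof largely amounts to collecting those facts and appending the routine functoriality checks.

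First I would recall from Example \ref{ej1} that, for every object $X$ of ${\sf C}$, the $4$-tuple $(H\ot X,\psi_{H\ot X}^1,\psi_{H\ot X}^2,\rho_{H\ot X})$ satisfies conditions (i)--(iv) of Definition \ref{deHmod} and is therefore a genuine object of ${\mathbb H}$-{\sf Hopf-Mod}. In particular the compatibility condition (iv), relating $\sigma_{H}$, the two actions and the equalizer morphism $j_{H\ot X}=\eta_{H}\ot X$, was verified there by means of (\ref{cocycle}). This guarantees that $\mathsf{F}$ is well defined on objects.

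Next, for a morphism $f\colon X\to Y$ in ${\sf C}$, Example \ref{ej1} also shows that $H\ot f$ is a morphism of left ${\mathbb H}$-Hopf modules, i.e. it is simultaneously left $H_{1}$-linear, non-unital left $H_{2}$-linear and left $H$-colinear. Each of these three conditions reduces, via the interchange law $(\alpha\ot\beta)\circ(\alpha'\ot\beta')=(\alpha\circ\alpha')\ot(\beta\circ\beta')$ for the tensor product, to an identity that holds automatically; for instance left $H_{i}$-linearity ($i=1,2$) is $(\mu_{H}^{i}\ot Y)\circ(H\ot(H\ot f))=(H\ot f)\circ(\mu_{H}^{i}\ot X)$, and colinearity is $(\delta_{H}\ot Y)\circ(H\ot f)=(H\ot(H\ot f))\circ(\delta_{H}\ot X)$. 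Hence $\mathsf{F}$ is well defined on morphisms.

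Finally I would dispatch the two functor axioms. Preservation of identities reads $\mathsf{F}(id_{X})=H\ot id_{X}=id_{H\ot X}=id_{\mathsf{F}(X)}$, and preservation of composition reads $\mathsf{F}(g\circ f)=H\ot(g\circ f)=(H\ot g)\circ(H\ot f)=\mathsf{F}(g)\circ\mathsf{F}(f)$; both are immediate from the fact that $H\ot-$ is the tensor-product bifunctor applied in its second variable. I do not expect any real obstacle here: the only nontrivial verifications---the Hopf-module axioms and especially the truss compatibility (iv)---were already completed in Example \ref{ej1}, so the present statement is essentially a repackaging of those computations together with the formal functoriality axioms.
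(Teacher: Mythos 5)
Your proof is correct and takes essentially the same approach as the paper, which states Theorem \ref{indF} without a separate proof, presenting it as an immediate consequence of the facts verified in Example \ref{ej1} (the $4$-tuple $(H\ot X,\psi_{H\ot X}^1,\psi_{H\ot X}^2,\rho_{H\ot X})$ is an object of ${\mathbb H}$-{\sf Hopf-Mod}, and $H\ot f$ is a morphism there). Your only addition is to make explicit the routine checks the paper treats as obvious---the linearity/colinearity of $H\ot f$ via the interchange law and the preservation of identities and composition---which is a harmless elaboration of the same argument.
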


\begin{theorem} (Fundamental Theorem of Hopf modules)
\label{fun} Let ${\mathbb H}=(H_{1}, H_{2},  \sigma_{H})$ be a Hopf truss  and let $(M,\varphi_{M}, \psi_{M}, \rho_{M})$ be an object in ${\mathbb H}$-{\sf Hopf-Mod}. Then $(M,\psi_{M}^1, \psi_{M}^2, \rho_{M})$ and $F(M^{co H_1})$  are isomorphic in ${\mathbb H}$-{\sf Hopf-Mod}.
\end{theorem}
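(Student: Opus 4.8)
The plan is to reduce the statement to the classical Fundamental Theorem of Hopf modules for the Hopf monoid $H_{1}$ and then to verify the single extra compatibility coming from the truss structure. By hypothesis $(M,\psi_{M}^1,\rho_{M})$ is a left $H_{1}$-Hopf module, so the construction recalled in the remark preceding Definition \ref{deHmod} yields the morphism
$$\theta_{M}=\psi_{M}^1\circ (H\ot j_{M}):H\ot M^{co H_1}\rightarrow M,$$
which is an isomorphism in ${\sf C}$ with inverse $(H\ot t_{M})\circ \rho_{M}$ and which is at once left $H_{1}$-linear and a morphism of left $H$-comodules. Since $F(M^{co H_1})=(H\ot M^{co H_1},\mu_{H}^1\ot M^{co H_1},\mu_{H}^2\ot M^{co H_1},\delta_{H}\ot M^{co H_1})$ by Theorem \ref{indF}, and the comonoid structure is shared by $H_{1}$ and $H_{2}$, to conclude that $\theta_{M}$ is an isomorphism in ${\mathbb H}$-{\sf Hopf-Mod} it only remains to prove that $\theta_{M}$ is left $H_{2}$-linear, i.e. that
$$\theta_{M}\circ (\mu_{H}^2\ot M^{co H_1})=\psi_{M}^2\circ (H\ot \theta_{M}).$$
Once this holds, the inverse of $\theta_{M}$ is automatically a morphism for each of the three structures, so $\theta_{M}$ is the required isomorphism.

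The crux, which I would isolate as a lemma, is the collapse of the action $\Gamma_{M}^{\sigma_{H}}$ on coinvariants:
$$\Gamma_{M}^{\sigma_{H}}\circ (H\ot j_{M})=\varepsilon_{H}\ot j_{M}.$$
To prove it I would start from the definition of $\Gamma_{M}^{\sigma_{H}}$, use condition (iv) of Definition \ref{deHmod} to rewrite $\psi_{M}^2\circ (H\ot j_{M})$ as $\psi_{M}^1\circ (\sigma_{H}\ot j_{M})$, and then merge the two resulting copies of $\psi_{M}^1$ by left $H_{1}$-module associativity into $\psi_{M}^1\circ (((\lambda_{H}\circ \sigma_{H})\ast \sigma_{H})\ot j_{M})$. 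Since $\sigma_{H}$ is a comonoid morphism this convolution equals $(\lambda_{H}\ast id_{H})\circ \sigma_{H}$, which by the antipode identity (\ref{antipode}) together with $\varepsilon_{H}\circ \sigma_{H}=\varepsilon_{H}$ reduces to $\eta_{H}\circ \varepsilon_{H}$; the unit property of $\psi_{M}^1$ then gives the claimed identity.

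With the lemma in hand the $H_{2}$-linearity follows by a direct computation. I would write
$$\psi_{M}^2\circ (H\ot \theta_{M})=\psi_{M}^2\circ (H\ot \psi_{M}^1)\circ (H\ot H\ot j_{M}),$$
apply the module compatibility (\ref{mod-l1}) to the factor $\psi_{M}^2\circ (H\ot \psi_{M}^1)$, and substitute the lemma so as to replace $\Gamma_{M}^{\sigma_{H}}\circ (H\ot j_{M})$ by $\varepsilon_{H}\ot j_{M}$. Using naturality of the braiding $c$ with respect to $\varepsilon_{H}$ to slide the counit past the crossing, and then the counit property $(\varepsilon_{H}\ot H)\circ \delta_{H}=id_{H}$ to absorb the surviving comultiplication, the expression collapses to $\psi_{M}^1\circ (\mu_{H}^2\ot j_{M})$, which is exactly $\theta_{M}\circ (\mu_{H}^2\ot M^{co H_1})$. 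This establishes the $H_{2}$-linearity and hence the theorem.

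I expect the lemma to be the main obstacle: it is the only place where the cocycle $\sigma_{H}$, the Hopf-module compatibility (iv) and the antipode of $H_{1}$ must be balanced against one another, and the braiding bookkeeping (the naturality of $c$ with respect to $\varepsilon_{H}$) must be handled with care. Everything else is either inherited verbatim from the classical Fundamental Theorem applied to $H_{1}$ or is routine manipulation with counits and module associativity.
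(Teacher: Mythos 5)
Your proposal is correct and follows essentially the same route as the paper: the paper also invokes the classical Fundamental Theorem for $H_{1}$ to get that $\theta_{M}=\psi_{M}^1\circ (H\ot j_{M})$ is an isomorphism of $H_{1}$-Hopf modules and of comodules, and then proves $H_{2}$-linearity by applying (\ref{mod-l1}) and using condition (iv) of Definition \ref{deHmod}, the comonoid morphism property of $\sigma_{H}$, the antipode identity and the counit/naturality bookkeeping. The only difference is organizational: you isolate the collapse $\Gamma_{M}^{\sigma_{H}}\circ (H\ot j_{M})=\varepsilon_{H}\ot j_{M}$ as a standalone lemma, whereas the paper performs exactly this computation inline within the chain of equalities.
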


\begin{proof} Let $(M,\psi_{M}^1, \psi_{M}^2, \rho_{M})$ be an object in ${\mathbb H}$-{\sf Hopf}. By Theorem \ref{indF}, the $4$-tuple $$(H\ot M^{coH_1},\psi_{H\ot M^{coH_1}}^1, \psi_{H\ot M^{coH_1}}^2, \rho_{H\ot M^{coH_1}})$$  is a left ${\mathbb H}$-Hopf module. Let $\theta_{M}:H\ot M^{coH_1}\rightarrow M$ be the morphism defined by $\theta_{M}=\psi_{M}^1\co (H\ot j_{M})$. Then, by the general theory of Hopf modules exposed at the beginning of this section, $\theta_{M}$ is an isomorphism  of $H_{1}$-Hopf modules with inverse $\theta_{M}^{-1}=(H\ot t_{M})\co \rho_{M}$. Also, $\theta_{M}$ is an isomorphism  of non-unital $H_{2}$-Hopf modules because
\begin{itemize}
\item[ ]$\hspace{0.38cm} \psi_{M}^2\circ (H\otimes \theta_{M}) $
		
\item[ ]$= \psi_{M}^1\circ (\mu_{H}^{2}\otimes \Gamma_{M}^{\sigma_{H}})\circ (H\otimes c_{H,H}\otimes M)\circ (\delta_{H}\otimes H\otimes j_{M})$  {\scriptsize (by \ref{mod-l1})}
		
\item[ ]$=\psi_{M}^1\circ (\mu_{H}^{2}\otimes (\psi_{M}^{1}\co (H\ot \psi_{M}^{1})\co ((((\lambda_{H}\circ \sigma_{H})\otimes \sigma_{H})\circ \delta_{H})\ot M)))\circ (H\otimes c_{H,H}\otimes M)\circ (\delta_{H}\otimes H\otimes j_{M})$  
\item[ ]$\hspace{0.38cm}${\scriptsize (by (v) of definition \ref{deHmod})}
		
\item[ ]$=\psi_{M}^1\circ (\mu_{H}^{2}\otimes (\psi_{M}^{1}\co ((\lambda_{H}\ast id_{H})\circ \sigma_{H})\otimes M))\circ (H\otimes c_{H,H}\otimes M)\circ (\delta_{H}\otimes H\otimes j_{M}) $ {\scriptsize  (by the}
\item[ ]$\hspace{0.38cm}$ {\scriptsize  condition of comonoid morphism for $\sigma_{H}$ and the condition of left $H_{1}$-module for $M$)}
		
\item[ ]$=\psi_{M}^1\circ (\mu_{H}^2\ot j_{M})$ {\scriptsize  (by (\ref{antipode}), the naturality of $c$, the counit properties and the condition of left $H_{1}$-module for $M$)}
		
\item[ ]$=\theta_{M}\circ \psi_{H\ot M^{co H_{1}}}^2$ {\scriptsize  (by definition.)}
		
\end{itemize}
	
Therefore, $\theta_{M}$ is an isomorphism  in ${\mathbb H}$-{\sf Hopf-Mod} because, by (ii) of Definition \ref{deHmod}, the properties of $j_{M}$, the naturality of $c$ and the properties of $\eta_{H}$, we have that $\theta_{M}$ is a morphism of left $H$-comodules.
	
\end{proof}

In Theorem \ref{indF} we construct the induction functor ${\sf F}=H\ot -:{\sf C}\rightarrow {\mathbb H}$-{\sf Hopf-Mod} for a Hopf truss ${\mathbb H}=(H_{1}, H_{2},  \sigma_{H})$. As in the Hopf case,  for all $M\in {\mathbb H}$-{\sf Hopf-Mod}, the construction of the subobject of coinvariants $M^{co {\mathbb H}}:=M^{co H_{1}}$ is functorial. Thus, there exists a functor of coinvariants ${\sf W}=(\;\;)^{co {\mathbb H}}: {\mathbb H}$-{\sf Hopf-Mod}$\rightarrow {\sf C}$ defined on objects by $={\sf W}((M,\psi_{M}^1, \psi_{M}^2, \rho_{M}))=M^{co {\mathbb H}}$ where $M^{co {\mathbb H}}=M^{coH_1}$ and on  morphisms $f:M\rightarrow N$ by ${\sf W}(f)=f^{co{\mathbb H}}$, where $f^{co{\mathbb H}}:=f^{coH_1}$ is the unique morphism such that $j_{N}\co f^{coH_1}=f\circ j_{M}$. 

In the end  of this section we prove that there exists a categorical equivalence between {\sf C} and the category ${\mathbb H}$-{\sf Hopf-Mod} for a Hopf truss ${\mathbb H}$. 

\begin{theorem}
\label{prin2} Let ${\mathbb H}$ be a Hopf truss. The induction functor ${\sf F}=H\ot -:{\sf C}\rightarrow {\mathbb H}$-{\sf Hopf-Mod} is left adjoint of the functor of coinvariants ${\sf W}=(\;\;)^{co {\mathbb H}}: {\mathbb H}$-{\sf Hopf-Mod}$\rightarrow {\sf C}$  and they induce a categorical equivalence between ${\mathbb H}$-{\sf Hopf-Mod} and {\sf C}.
\end{theorem}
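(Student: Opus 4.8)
The plan is to realize ${\sf F}\dashv {\sf W}$ as an \emph{adjoint} equivalence whose counit is precisely the isomorphism $\theta_{M}$ furnished by the Fundamental Theorem (Theorem \ref{fun}) and whose unit is the identity; the equivalence is then automatic, since an adjunction whose unit and counit are both isomorphisms is an adjoint equivalence. First I would pin down the unit. By Example \ref{ej1}, for every object $X$ of ${\sf C}$ one has $(H\ot X)^{co H_{1}}=X$ with equalizer morphism $j_{H\ot X}=\eta_{H}\ot X$, so ${\sf W}({\sf F}(X))=X$ on objects; checking on a morphism $f:X\to Y$ that ${\sf W}({\sf F}(f))=(H\ot f)^{co H_{1}}=f$ (using $j_{H\ot X}=\eta_{H}\ot X$ and the defining property of $(H\ot f)^{co H_{1}}$) yields ${\sf W}\co {\sf F}={\sf Id}_{{\sf C}}$, so the unit $u$ may be taken to be the identity natural transformation.

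Next I would take as counit component, for each object $M=(M,\psi_{M}^{1},\psi_{M}^{2},\rho_{M})$ of ${\mathbb H}$-{\sf Hopf-Mod}, the morphism $\theta_{M}=\psi_{M}^{1}\co (H\ot j_{M}):{\sf F}({\sf W}(M))=H\ot M^{co H_{1}}\to M$, which by Theorem \ref{fun} is an isomorphism in ${\mathbb H}$-{\sf Hopf-Mod}. Its naturality in $M$ is the only genuinely new verification. For a morphism $g:M\to M^{\prime}$ in ${\mathbb H}$-{\sf Hopf-Mod}, using the $H_{1}$-linearity of $g$ I would compute
\[
g\co \theta_{M}=g\co \psi_{M}^{1}\co (H\ot j_{M})=\psi_{M^{\prime}}^{1}\co (H\ot (g\co j_{M})),
\]
and then invoke the defining identity $j_{M^{\prime}}\co g^{co H_{1}}=g\co j_{M}$ to rewrite the right-hand side as $\psi_{M^{\prime}}^{1}\co (H\ot j_{M^{\prime}})\co (H\ot g^{co H_{1}})=\theta_{M^{\prime}}\co {\sf F}({\sf W}(g))$, which is exactly the naturality square.

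The two triangle identities then reduce to short computations, since $u$ is the identity. For $X$ in ${\sf C}$ one has $\theta_{{\sf F}(X)}=(\mu_{H}^{1}\ot X)\co (H\ot \eta_{H}\ot X)=id_{H\ot X}$ by the unit axiom of $\mu_{H}^{1}$, which is the triangle identity for ${\sf F}$. For $M$ in ${\mathbb H}$-{\sf Hopf-Mod}, the relation $\theta_{M}\co (\eta_{H}\ot M^{co H_{1}})=\psi_{M}^{1}\co (\eta_{H}\ot j_{M})=j_{M}$ (again the unit axiom, now for $\psi_{M}^{1}$) together with $j_{H\ot M^{co H_{1}}}=\eta_{H}\ot M^{co H_{1}}$ gives $j_{M}\co {\sf W}(\theta_{M})=\theta_{M}\co j_{H\ot M^{co H_{1}}}=j_{M}$, whence ${\sf W}(\theta_{M})=id_{M^{co H_{1}}}$ because $j_{M}$, being an equalizer morphism, is a monomorphism; this is the triangle identity for ${\sf W}$.

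With the unit, the counit, and the two triangle identities in place, ${\sf F}\dashv {\sf W}$. The delicate point---that $\theta_{M}$ is a well-defined isomorphism in the \emph{full} category ${\mathbb H}$-{\sf Hopf-Mod}, compatible with both actions $\psi_{M}^{1}$, $\psi_{M}^{2}$ and with the coaction $\rho_{M}$---has already been settled in Theorem \ref{fun}, so the real engine of the argument is borrowed from there; the main genuinely new obstacle here is the naturality of $\theta$, and it dissolves cleanly once one combines the $H_{1}$-linearity of morphisms with the universal property of the equalizer defining coinvariants. Since the unit is the identity and the counit $\theta$ is a natural isomorphism, the adjunction is an adjoint equivalence, and therefore ${\sf F}$ and ${\sf W}$ induce the asserted categorical equivalence between ${\mathbb H}$-{\sf Hopf-Mod} and ${\sf C}$.
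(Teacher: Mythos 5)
Your proof is correct and follows essentially the same route as the paper: the unit is the identity (via Example \ref{ej1}), the counit is the isomorphism $\theta_{M}$ from Theorem \ref{fun}, and the adjunction is an adjoint equivalence because both are isomorphisms. The only difference is one of detail: the paper disposes of the triangle identities by observing that they already hold for the classical adjunction between ${\sf C}$ and $H_{1}$-{\sf Hopf-Mod}, whereas you verify them (and the naturality of $\theta$) directly from the unit axioms and the monomorphism property of the equalizer $j_{M}$, which makes your write-up more self-contained but not a different argument.
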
 

\begin{proof} Let $X$ be an object in {\sf C}. Then the unit of the adjunction is $\alpha_{X}=id_{X}$ because as we proved in Example \ref{ej1}, ${\sf W}({\sf F}(X))=X$. For all $(M,\psi_{M}^1, \psi_{M}^2, \rho_{M})$  in ${\mathbb H}$-{\sf Hopf-Mod}, the counit is defined by $\beta_{M}=\theta_{M}$ where $\theta_{M}$ is the isomorphism introduced in the proof of Theorem \ref{fun}. The triangular identities hold, because they hold  for the adjunction between the categories {\sf C} and $H_{1}$-{\sf Hopf-Mod}.
\end{proof}

If we particularize the previous theorems to the case of Hopf braces, then we have the Fundamental Theorem of Hopf Modules and the associated categorical equivalence obtained in \cite{RGON}.

\begin{corollary}
Let ${\mathbb H}=(H_1,H_2)$ be a Hopf truss.  Then, the categories ${H_1}$-{\sf Hopf-Mod}  and ${\mathbb H}$-{\sf Hopf-Mod}  are equivalent.
\end{corollary}

\begin{proof}
By virtue of Remark \ref{rmk1} and Theorem \ref{prin2}, if ${\mathbb H} = (H_1, H_2)$ is a Hopf truss, then  the categories ${H_1}$-{\sf Hopf-Mod}  and ${\mathbb H}$-{\sf Hopf-Mod}  are equivalent because they are equivalent to {\sf C}. 
\end{proof}

\section*{Authors Contribution} Ram\'on Gonz\'alez Rodr\'{\i}guez and Ana Bel\'en  Rodr\'{\i}guez Raposo participated at all stages in the preparation of the manuscript.

\section*{Funding Declaration}
The  authors were supported by  Ministerio de Ciencia e Innovaci\'on of Spain. Agencia Estatal de Investigaci\'on. Uni\'on Europea - Fondo Europeo de Desarrollo Regional (FEDER). Grant PID2020-115155GB-I00: Homolog\'{\i}a, homotop\'{\i}a e invariantes categ\'oricos en grupos y \'algebras no asociativas.

\section*{Data Availability} Not applicable to this article.

\section*{Competing Interests Declaration} The authors declares no conflict of interest.

\end{document}